\newtheorem{cor}{Corollary}
\newtheorem{lem}{Lemma}
\newcommand{\expect}[1]{\mathbb{E}\left[#1\right]}
\newcommand{\defequiv}{\mbox{\raisebox{-.3ex}{$\overset{\vartriangle}{=}$}}}
\newcommand{\norm}[1]{||{#1}||}
\newcommand{\script}[1]{{{\cal{#1} }}}
\begin{document}

\title
  {Energy-Aware Wireless Scheduling with Near Optimal Backlog and Convergence Time Tradeoffs}
\author{Michael J. Neely\\University of Southern California\\$\vspace{-.5in}$
\thanks{The author is with the  Electrical Engineering department at the University
of Southern California, Los Angeles, CA.} 
\thanks{This work is supported in part  by one or more of:  the NSF Career grant CCF-0747525,  the 
Network Science Collaborative Technology Alliance sponsored
by the U.S. Army Research Laboratory W911NF-09-2-0053.}
}

\markboth{}{Neely}

\maketitle

\begin{abstract}   
This paper considers a wireless link with randomly arriving data that is queued and 
served over a time-varying channel.  It is known that any algorithm that 
comes within $\epsilon$ of the minimum average power required for queue stability must
incur average queue size at least $\Omega(\log(1/\epsilon))$.    However, the optimal convergence
time is unknown, and prior algorithms give convergence time bounds of  $O(1/\epsilon^2)$. 
This paper develops a scheduling algorithm that, for any $\epsilon>0$,  achieves the optimal $O(\log(1/\epsilon))$ 
average queue size 
tradeoff with an improved convergence time of $O(\log(1/\epsilon)/\epsilon)$.  This is shown to be within a logarithmic
factor of the best possible convergence time.   The method uses the simple drift-plus-penalty technique with an improved convergence time analysis. 
\end{abstract} 

\section{Introduction}

This paper considers power-aware scheduling in a wireless link with a time-varying channel
and randomly arriving data.  Arriving data is queued for eventual transmission.  The transmission rate
out of the queue is determined by the current channel state and the current power allocation decision.  
Specifically, the controller can make an \emph{opportunistic scheduling} decision by observing the channel before allocating power.  For a given $\epsilon>0$, the goal is to push average power to within $\epsilon$ of the minimum possible average power required for queue stability while ensuring optimal queue size and convergence time tradeoffs.  

A major difficulty is that the data arrival rate and the channel probabilities are unknown. Hence, the  convergence time of an algorithm includes the \emph{learning time} associated with estimating probability distributions or ``sufficient statistics'' of these distributions.   The optimal learning time required to achieve the average 
power and backlog objectives, as well as the appropriate sufficient statistics to learn,  are unknown.   This open question is important because it determines how fast an algorithm can adapt to its environment. 
A contribution of the current paper is the development of 
an algorithm that, under suitable assumptions, provides an optimal power-backlog tradeoff while 
provably coming within a logarithmic factor of the optimal convergence time.   This is done via the existing 
drift-plus-penalty algorithm but with an improved convergence time analysis. 

Work on opportunistic scheduling was pioneered by Tassiulas and Ephremides in \cite{tass-server-allocation}, where the Lyapunov method and the \emph{max-weight} algorithms were introduced for queue stability.  
Related opportunistic scheduling work that focuses on utility optimization 
is given in \cite{atilla-fairness-ton}\cite{atilla-primal-dual-jsac}\cite{lee-stochastic-scheduling}\cite{prop-fair-down}\cite{vijay-allerton02}\cite{stolyar-greedy}\cite{neely-fairness-ton}\cite{sno-text} using dual, primal-dual, and stochastic gradient methods, and in \cite{shroff-opportunistic} using index policies. 
The basic drift-plus-penalty algorithm of Lyapunov optimization can be viewed as a \emph{dual method}, and 
is known to provide, for any $\epsilon>0$, 
an $\epsilon$-approximation to minimum average power with a corresponding $O(1/\epsilon)$ tradeoff in average queue size \cite{sno-text}\cite{neely-energy-it}. This tradeoff is not optimal.    Work by Berry and Gallager in \cite{berry-fading-delay} shows that, for queues with \emph{strictly concave} rate-power curves, any algorithm that achieves an $\epsilon$-approximation must incur average backlog of $\Omega(\sqrt{1/\epsilon})$, even if that algorithm knows all system probabilities.  Work in \cite{neely-energy-delay-it} shows this tradeoff is achievable (to within a logarithmic factor) using an algorithm that does \emph{not} know the system probabilities.  The work \cite{neely-energy-delay-it} further considers the exceptional case when rate-power curves are \emph{piecewise linear}.  In that case, an improved tradeoff of $O(\log(1/\epsilon))$ is both achievable and  optimal.  This is done using an exponential Lyapunov function together with a drift-steering argument.  Work in \cite{longbo-LIFO-ton}\cite{longbo-lagrange-tac} shows that similar logarithmic tradeoffs are possible via the basic drift-plus-penalty algorithm with  Last-in-First-Out scheduling. 

Now consider the question of \emph{convergence time}, being the time required for the average queue size and power guarantees to kick in. This convergence time question is unique to problems of  \emph{stochastic scheduling when system probabilities are unknown}.  If probabilities were known, the optimal fractions of time for making certain decisions could be computed offline (possibly via a very complex optimization), so that system averages would ``kick in'' immediately at time 0.  Thus, convergence time in the context of this paper 
should not be confused with \emph{algorithmic complexity} for non-stochastic 
optimization problems. 

Unfortunately, prior work that treats stochastic scheduling with unknown probabilities, 
including the basic drift-plus-penalty algorithm as well as extensions 
that achieve square root and logarithmic tradeoffs, 
give  only $O(1/\epsilon^2)$ convergence time guarantees.    Recent work in \cite{atilla-convergence-infocom2013} treats 
convergence time for a related problem of flow rate allocation and concludes that constraint violations decay as $c(\epsilon)/t$, where $c(\epsilon)$ is a constant that depends on $\epsilon$ and $t$ is the total time the algorithm has been in operation. While the work \cite{atilla-convergence-infocom2013} does not specify the size of the 
$c(\epsilon)$ constant, it can be shown that $c(\epsilon) = O(1/\epsilon)$.  Intuitively, this is because the $c(\epsilon)$ value is related to an average queue size, which is $O(1/\epsilon)$. The time $t$ needed to ensure
constraint violations are at most $\epsilon$ is found by solving $c(\epsilon)/t = \epsilon$. The simple answer is $t = O(1/\epsilon^2)$, again exhibiting $O(1/\epsilon^2)$ convergence time!  
This leads one to suspect that $O(1/\epsilon^2)$ is \emph{optimal}.   

This paper shows, for the first time, that $O(1/\epsilon^2)$ convergence time is \emph{not optimal}.  Specifically, under the same piecewise linear assumption in \cite{neely-energy-delay-it}, and for the special case of a system with just one queue, it is shown that the existing drift-plus-penalty algorithm yields an $\epsilon$-approximation  with both $O(\log(1/\epsilon))$ average queue size and $O(\log(1/\epsilon)/\epsilon)$ convergence time.  This is an encouraging result that shows learning times for power-aware scheduling can be pushed much smaller than expected.  


The next section specifies the problem formulation.  Section \ref{section:converse} shows a lower bound on 
convergence time of $\Omega(1/\epsilon)$.  Section \ref{section:algorithm} develops an algorithm that achieves this bound to within a logarithmic factor. 

\section{System model} \label{section:model} 

Consider a wireless link with randomly arriving traffic.  The system operates in slotted time with slots $t \in \{0, 1, 2, \ldots\}$.  Data arrives every slot and is queued for transmission.  Define:  
\begin{eqnarray*}
Q(t) &=& \mbox{queue backlog on slot $t$} \\
a(t) &=& \mbox{new arrivals on slot $t$} \\
\mu(t) &=& \mbox{service offered on slot $t$} 
\end{eqnarray*}
The values of $Q(t), a(t), \mu(t)$ are nonnegative and their units 
depend on the system of interest.  For example, they 
can take integer units of \emph{packets} (assuming packets have fixed size), or 
real units of \emph{bits}.   Assume the queue is initially empty, so that $Q(0)=0$. 
The queue dynamics  are: 
\begin{equation} \label{eq:q-update} 
 Q(t+1) = \max[Q(t) + a(t) - \mu(t), 0] 
 \end{equation} 
 
 Assume that $\{a(t)\}_{t=0}^{\infty}$ is an independent and identically distributed (i.i.d.) sequence with mean $\lambda = \expect{a(t)}$.  For simplicity, assume the amount of arrivals in one slot is bounded by a constant $a_{max}$, so that 
 $0 \leq a(t) \leq a_{max}$ for all slots $t$. 
 
 If the controller
 decides to transmit data on slot $t$, it uses one unit of power.  Let $p(t) \in \{0,1\}$ be the power used on slot $t$. 
 The amount of data that can be transmitted depends on the current channel state.  Let $\omega(t)$ be the amount
 of data that can be transmitted on slot $t$ if power is allocated, so that: 
 \[ \mu(t) = p(t)\omega(t) \]
 Assume that $\omega(t)$ is i.i.d. over slots and takes values in a finite set $\Omega = \{\omega_0, \omega_1, \omega_2, \ldots, \omega_M\}$, where $\omega_0=0$ and $\omega_i$ is a positive real number for 
 all $i \in \{1, \ldots, M\}$.  Assume these values are ordered so that: 
 \[ 0 = \omega_0 < \omega_1 < \omega_2 < \cdots < \omega_M\]
 For each $\omega_k \in \Omega$, define $\pi(\omega_k) = Pr[\omega(t)=\omega_k]$.  
 
 Every slot $t$ the system controller observes $\omega(t)$ and then chooses $p(t) \in \{0,1\}$.  The choice $p(t)=1$ activates the link for transmission of $\omega(t)$ units of data.  Fewer than $\omega(t)$ units are transmitted if $Q(t)<\mu(t)$ (see the queue equation \eqref{eq:q-update}).   The largest possible average transmission rate is $\expect{\omega(t)}$, which is achieved by using $p(t)=1$ for all $t$.  It is assumed throughout that $0 \leq \lambda \leq \expect{\omega(t)}$.

\subsection{Optimization goal}

For a real-valued random process $b(\tau)$ that evolves over slots $\tau \in \{0, 1, 2, \ldots\}$, 
define its time average expectation over $t>0$ slots as: 
\begin{equation} \label{eq:overline-def} 
\overline{b}(t) \defequiv \frac{1}{t}\sum_{\tau=0}^{t-1} \expect{b(\tau)}
\end{equation} 
where ``$\defequiv$'' represents ``defined to be equal to.'' 
With this notation, $\overline{\mu}(t)$, $\overline{p}(t)$, $\overline{Q}(t)$ respectively denote the time average expected transmission rate, power, and queue size over the first $t$ slots. 

The basic stochastic optimization problem of interest is: 
\begin{eqnarray}
\mbox{Minimize:} & \limsup_{t\rightarrow\infty} \overline{p}(t) \label{eq:p1}  \\
\mbox{Subject to:} & \liminf_{t\rightarrow\infty} \overline{\mu}(t) \geq \lambda  \label{eq:p2} \\
& p(t) \in \{0,1\} \: \: \: \:  \forall t \in \{0, 1, 2, \ldots\} \label{eq:p3} 
\end{eqnarray}
The assumption $\lambda \leq \expect{\omega(t)}$ ensures the above 
problem is always \emph{feasible}, so that it is possible to satisfy constraints \eqref{eq:p2}-\eqref{eq:p3} using  
$p(t)=1$ for all $t$. 
Define $p^*$ as the infimum average power for the above problem.  
An algorithm is said to produce an \emph{$\epsilon$-approximation} at time $t$ if, for a given $\epsilon\geq0$: 
\begin{eqnarray*}
\overline{p}(t) &\leq& p^* + \epsilon \\
\lambda - \overline{\mu}(t) &\leq& \epsilon 
\end{eqnarray*}
An algorithm is said to produce an \emph{$O(\epsilon)$-approximation} if the $\epsilon$ symbols on the right-hand-side of the above two inequalities are replaced by some constant multiples of $\epsilon$. 

Fix $\epsilon>0$.  This paper shows that a simple drift-plus-penalty algorithm that takes $\epsilon$ as an input parameter (and that has no knowledge of the arrival rate or channel probabilities) can be used to ensure there is a time $T_{\epsilon}$, called the \emph{convergence time}, for which: 
\begin{itemize} 
\item The algorithm produces an $O(\epsilon)$-approximation for all $t \geq T_{\epsilon}$. 
\item The algorithm ensures the following for all $t \in \{0, 1,2, \ldots\}$: 
\begin{equation} \label{eq:bound} 
 \overline{Q}(t) \leq O(\log(1/\epsilon))
 \end{equation} 
\item $T_{\epsilon} = O(\log(1/\epsilon)/\epsilon)$. 
\end{itemize} 

The average queue size bound \eqref{eq:bound} is known to be optimal, in the sense that no algorithm 
can provide a sub-logarithmic guarantee \cite{neely-energy-delay-it}.  The next section shows that the convergence time $O(\log(1/\epsilon)/\epsilon)$ is within a logarithmic factor of the optimal convergence time. 

\section{A lower bound on convergence time} \label{section:converse} 

\subsection{Intuition} 

One type of power allocation policy is an \emph{$\omega$-only policy} that, every slot $t$, observes
$\omega(t)$ and independently chooses $p(t) \in \{0,1\}$ according to some stationary conditional probabilities 
$Pr[p(t)=1|\omega(t)=\omega]$ that are specified for all $\omega \in \Omega$.  The resulting average power and transmission rate is: 
\begin{eqnarray*}
\expect{p(t)} &=& \mbox{$\sum_{k=0}^M\pi(\omega_k)Pr[p(t)=1|\omega(t)=\omega_k]$} \\
\expect{\mu(t)} &=& \mbox{$\sum_{k=0}^M \pi(\omega_k)\omega_k Pr[p(t)=1|\omega(t)=\omega_k]$}
\end{eqnarray*}
It is known that the problem \eqref{eq:p1}-\eqref{eq:p3} is solvable over the class of $\omega$-only policies \cite{sno-text}.  Specifically, 
if the arrival rate $\lambda$ and the channel probabilities $\pi(\omega_k)$ were known in advance, 
one could offline compute an $\omega$-only policy to satisfy: 
\begin{eqnarray}
\expect{p(t)} &=& p^* \label{eq:ideal1}\\
\expect{\mu(t)} &=& \lambda \label{eq:ideal2} 
\end{eqnarray}
This is a $0$-approximation for all $t \geq 0$.  However, such an algorithm would typically incur infinite average queue size (since the service rate  equals the arrival rate).  Further, it is not possible to implement this algorithm without perfect knowledge of $\lambda$ and $\pi(\omega_k)$ for all $\omega_k \in \Omega$. 

Suppose one temporarily allows for infinite average queue size.
Consider the following thought experiment (similar to that considered for utility optimal flow allocation  
in \cite{atilla-convergence-infocom2013}). Consider an algorithm that does not know the system probabilities and hence makes a \emph{single mistake} at time $0$, so that: 
\[ \expect{p(0)} = p^* + c \]
where $c>0$ is some constant gap away from the optimal average power $p^*$.  However, suppose a genie gives the controller perfect knowledge of the system probabilities at time $1$, and then for slots $t \geq 1$ the network makes decisions to achieve the ideal averages \eqref{eq:ideal1}-\eqref{eq:ideal2}. The resulting time average expected power over the first $t>1$ slots is: 
\[ \overline{p}(t) = \frac{p^* + c}{t} + \frac{(t-1)p^*}{t} = p^* + \frac{c}{t} \]
Thus, to reach an $\epsilon$-approximation, this genie-aided algorithm requires a convergence time  
$t = c/\epsilon = \Theta(1/\epsilon)$. 

\subsection{An example with $\Omega(1/\epsilon)$ convergence time}\label{section:converseb} 

The above thought experiment does not prove an $\Omega(1/\epsilon)$ bound on convergence time because it assumes the algorithm makes decisions according to \eqref{eq:ideal1}-\eqref{eq:ideal2} for all slots $t\geq1$, which may not be the optimal way to compensate for the mistake on slot $0$.  This section defines a simple system for which convergence time is at least $\Omega(1/\epsilon)$ under any algorithm. 

Consider a system  with deterministic arrivals of 1 packet every slot (so $\lambda =1$). 
There are three possible channel states $\omega(t) \in  \{1, 2, 3\}$, with probabilities: 
\[  \pi(3)=y, \pi(2)=z, \pi(1)=1-y-z \]
For each slot $t>0$, define the \emph{system history} $\script{H}(t) = \{(a(0), \omega(0), p(0)), \ldots, (a(t-1), \omega(t-1), p(t-1))\}$. Define $\script{H}(0)=0$.
For each slot $t$, a general algorithm has conditional probabilities $\theta_i(t)$ defined for $i \in \{1, 2, 3\}$ by: 
\[ \theta_i(t) = Pr[p(t)=1|\omega(t)=i, a(t), \script{H}(t)] \] 
On a single slot, it is not difficult to show that the minimum average power $\expect{p(t)}$ required to achieve a 
given average service rate $\mu= \expect{\mu(t)}$ is characterized by the following function $h(\mu)$: 
\[ h(\mu) \defequiv  \left\{\begin{array}{cc}
\mu/3 & \mbox{ if $0 \leq \mu \leq 3y$} \\
y + (\mu-3y)/2 & \mbox{ if $3y \leq \mu \leq 3y + 2z$} \\
\mu-2y-z & \mbox{ if $3y+2z \leq \mu \leq 2y + z + 1$} 
\end{array}\right. \]
There are two significant vertex points $(\mu, h(\mu))$ for this function.  The first is $(3y,y)$, achieved by allocating 
power if and only if $\omega(t) =3$.  The second is $(3y+2z, y+z)$, 
achieved by allocating power if and only if $\omega(t) \in \{2, 3\}$.  

Define $\script{R}$ as the set of points $(\mu,p)$ that lie on or above the curve $h(\mu)$: 
\[ \script{R} = \{(\mu,p) \in \mathbb{R}^2 | 0 \leq \mu \leq 2y+z+1, h(\mu) \leq p \leq 1 \} \]
The set $\script{R}$ is convex. 
Under any algorithm one has: 
\[ (\expect{\mu(\tau)}, \expect{p(\tau)}) \in \script{R} \: \: \: \: \forall \tau \in \{0, 1,2 , \ldots\} \]
For a given $t>1$, the following two vectors must be in $\script{R}$: 
\begin{eqnarray*}
(\mu_0, p_0) &\defequiv& (\expect{\mu(0)}, \expect{p(0)}) \\
(\mu_1, p_1) &\defequiv& \frac{1}{t-1}\sum_{\tau=1}^{t-1} (\expect{\mu(\tau)}, \expect{p(\tau)}) 
\end{eqnarray*}
That $(\mu_1, p_1)$ is in $\script{R}$ follows because it is the average of points in $\script{R}$, and $\script{R}$ is convex. 
By definition of $(\overline{\mu}(t), \overline{p}(t))$: 
\begin{equation} \label{eq:case0} 
 (\overline{\mu}(t), \overline{p}(t)) = \frac{1}{t}(\mu_0, p_0)  + \frac{1-t}{t}(\mu_1, p_1) 
 \end{equation} 
 
Fix $\epsilon$ such that $0 < \epsilon < 1/64$. The algorithm must ensure $(\overline{\mu}(t), \overline{p}(t))$ is an $\epsilon$-approximation to the target point $(1, h(1))$, so that: 
\[ \overline{\mu}(t) \geq 1 - \epsilon \: \: , \: \: \overline{p}(t) \leq h(1) + \epsilon \]
The algorithm has no knowledge of the probabilities $y$ and $z$ at time $0$, so 
$\theta_1(0)$, $\theta_2(0)$, $\theta_3(0)$ are arbitrary.  Suppose a genie reveals $y$ and $z$ on slot $1$, and the network makes decisions on slots $\{1, \ldots, t-1\}$ that result in a $(\mu_1, p_1)$ vector that \emph{optimally compensates for any mistake on slot $0$}.  Thus, $(\mu_1, p_1)$ is assumed to be the vector in $\script{R}$ that ensures \eqref{eq:case0} produces an $\epsilon$-approximation in the smallest time $t$.

The following proof considers two cases:  The first case assumes $\theta_2(0)\leq 1/2$, but considers probabilities $y$ and $z$ for which 
minimizing average power requires \emph{always transmitting when $\omega(t)=2$}.  The second case assumes $\theta_2(0)>1/2$, but then considers probabilities $y$ and $z$ for which minimizing average power requires \emph{never transmitting when $\omega(t)=2$}.   In both cases, the nonlinear structure of the $h(\mu)$ curve prevents a fast recovery from the initial mistake.

\begin{itemize} 
\item Case 1: Suppose $\theta_2(0) \leq 1/2$.  Consider $y=0, z=1/4$. 
 Then $\omega(t) \in \{1,2\}$ for all $t$, $\pi(1)=3/4, \pi(2)=1/4$, and $\omega(t)=2$ is the most efficient state.   
 The $h(\mu)$ curve is shown in 
 Fig. \ref{fig:case1}.  The minimum average power to support $\lambda=1$ is $h(1)=3/4$, and so the target point is $X = (1, 3/4)$. 
 The point $(\mu_0, p_0) = (\expect{\mu(0)}, \expect{p(0)})$  is: 
 \[ (\mu_0,p_0) = \left(\frac{\theta_2(0)}{2} + \frac{3\theta_1(0)}{4}, \frac{\theta_2(0)}{4} + \frac{3\theta_1(0)}{4}\right) \]
The set of possible $(\mu_0, p_0)$ is formed by considering all  $\theta_2(0)\in[0,1/2]$, $\theta_1(0)\in[0,1]$.
This set lies inside the left (orange) shaded region of Fig. \ref{fig:case1}.  To see this, 
note that if $\theta_2(0)$ is fixed at a certain value, the resulting $(\mu_0,p_0)$ point lies on a line segment 
of slope $1$ that is formed by sweeping $\theta_1(0)$ through the interval $[0,1]$.  If $\theta_2(0)=1/2$, that line segment  is
between points $(1/4,1/8)$ and $(1,7/8)$ in Fig. \ref{fig:case1}.  If $\theta_2(0)<1/2$ then the line segment is shifted to the left.   

\begin{figure}[t]
   \centering
   \includegraphics[width=3in]{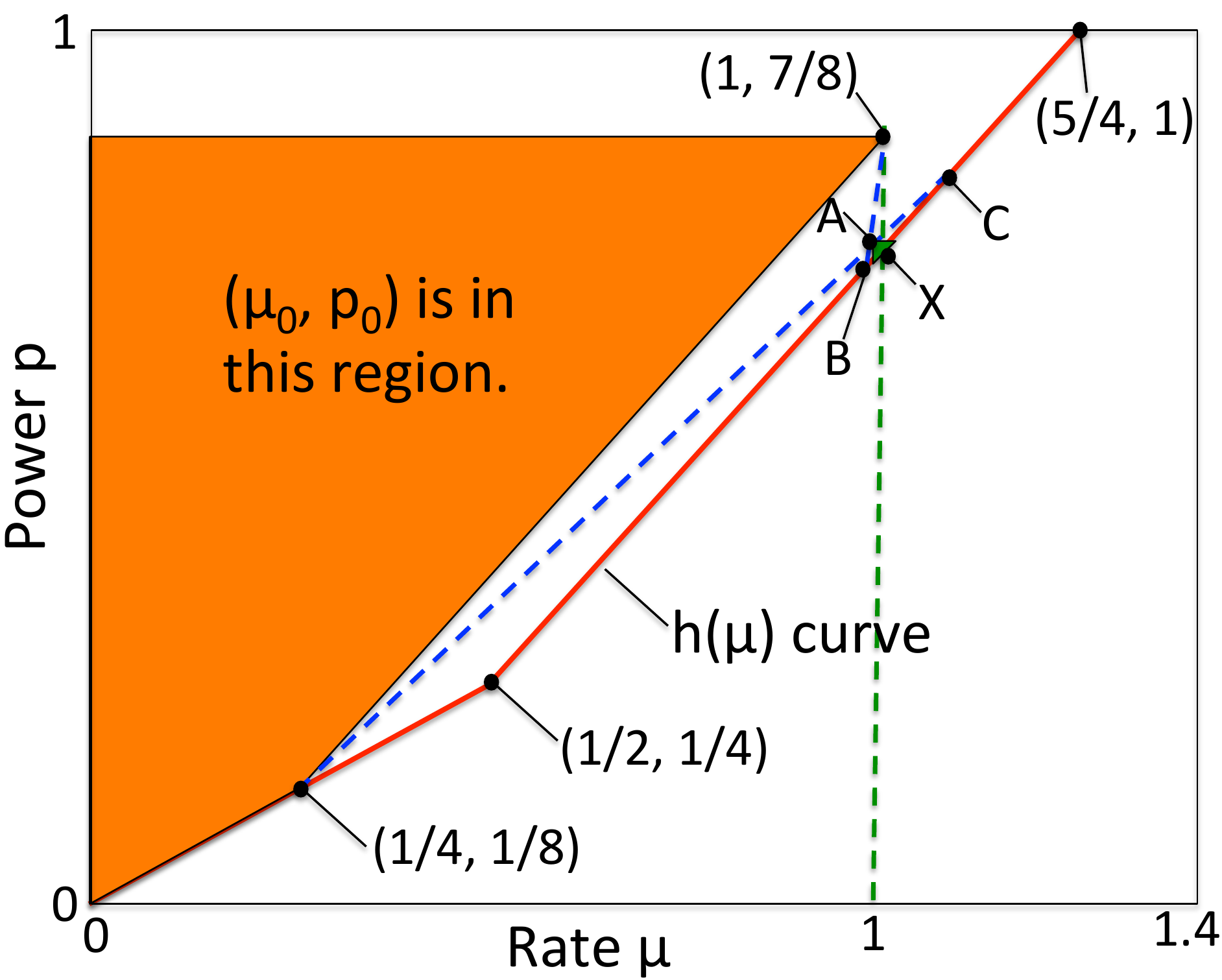} 
   \caption{The performance region for case 1.  The line segments between $(1/4, 1/8)$ and $C$ and between $(1, 7/8)$ and $B$ intersect at point $A$.}
   \label{fig:case1}
\end{figure}

The small triangular (green) shaded region in Fig. \ref{fig:case1}, with one vertex at point $A$, is the 
\emph{target region}. The vector $(\overline{\mu}(t), \overline{p}(t))$ must be in this region to be an 
$\epsilon$-approximation.   The point $A$ is defined: 
\[ A = X + (-\epsilon, \epsilon)  = (1-\epsilon, 3/4+\epsilon) \]
It suffices to search for an optimal compensation vector $(\mu_1, p_1)$ on the curve $(\mu, h(\mu))$. 
This is because 
the average power $p_1$ from a  
point $(\mu_1, p_1)$ \emph{above} the curve $(\mu, h(\mu))$ 
can be reduced, without affecting $\mu_1$, by choosing a point \emph{on} the curve. 
By geometry, $(\mu_1, p_1)$ must lie on the line segment between points $B$ and $C$ in Fig. \ref{fig:case1}, where: 
\begin{eqnarray*}
B &=& X - \left(\frac{\epsilon}{1-16\epsilon}, \frac{\epsilon}{1-16\epsilon}\right)  \\
C &=& X + \left(\frac{11\epsilon}{1-16\epsilon},  \frac{11\epsilon}{1-16\epsilon} \right) 
\end{eqnarray*}  
Indeed, if $(\mu_1, p_1)$ were on the $(\mu,h(\mu))$ curve but  \emph{not} in between points 
$B$ and $C$, it 
would be impossible for a convex combination of $(\mu_1,p_1)$ and $(\mu_0,p_0)$ to be in the target region (which is required by \eqref{eq:case0}). 

Observe that: 
\begin{eqnarray}
\norm{(\overline{\mu}(t), \overline{p}(t)) - X} &\leq& \epsilon\sqrt{2} \label{eq:case1-1} \\
\norm{(\mu_1, p_1) - X} &\leq& O(\epsilon) \label{eq:case1-2} \\
\norm{(\mu_0, p_0) - (\mu_1,p_1)} &\geq& \sqrt{2}/16 \label{eq:case1-3}
\end{eqnarray}
where \eqref{eq:case1-1} follows by considering the maximum distance between $X$ and any point in the target region, 
\eqref{eq:case1-2} holds because any vector on the line segment between $B$ and $C$ is $O(\epsilon)$ distance away from $X$, and \eqref{eq:case1-3} holds because the distance between any point on the line segment between $B$ and $C$ and a point in the left (orange) shaded region is at least $\sqrt{2}/16$ (being the distance between the two parallel lines of slope 1). 
Starting from \eqref{eq:case1-1} one has: 
\begin{eqnarray*}
\epsilon\sqrt{2}&\geq& \norm{(\overline{\mu}(t), \overline{p}(t)) - X} \\
&=& \norm{(1/t)(\mu_0, p_0) + (1-1/t)(\mu_1,p_1) - X} \\
&=& \norm{(1/t)[(\mu_0,p_0) - (\mu_1,p_1)] - [X - (\mu_1,p_1)]} \\
&\geq& (1/t)\norm{(\mu_0,p_0) - (\mu_1,p_1)} - \norm{X-(\mu_1,p_1)} \\
&\geq& \sqrt{2}/(16t) - O(\epsilon)
\end{eqnarray*}
where the first equality holds by \eqref{eq:case0}, 
the second-to-last inequality uses the triangle inequality $\norm{W - Z} \geq \norm{W} - \norm{Z}$ for any vectors $W$, $Z$, and the final inequality uses \eqref{eq:case1-2} and \eqref{eq:case1-3}. 
So $\sqrt{2}/(16t) \leq O(\epsilon)$.  It follows that $t \geq \Omega(1/\epsilon)$.

\item Case 2: Suppose $\theta_2(0) > 1/2$. However, suppose $y=z=1/2$.  So $\omega(t) \in \{2, 3\}$, $\pi(2)=\pi(3)=1/2$, and 
$\omega(t)=2$ is the \emph{least efficient} state.   The $h(\mu)$ curve is shown in Fig. \ref{fig:case2}.  Note that $h(1)=1/3$, and so the target point is $X = (1,1/3)$.  The point $A = (1-\epsilon, 1/3+\epsilon)$ is shown in Fig. \ref{fig:case2}.  Point $A$ is one vertex of the small triangular (green) target region 
that defines all points $(\overline{\mu}(t), \overline{p}(t))$ that are $\epsilon$-approximations.  

\begin{figure}[t]
   \centering
   \includegraphics[width=3in]{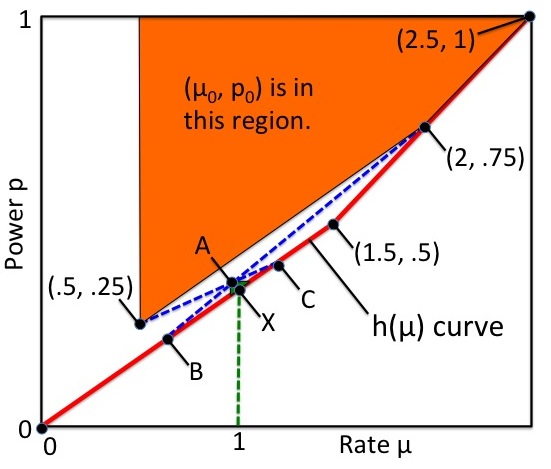} 
   \caption{The performance region for case 2.}
   \label{fig:case2}
\end{figure}

Because $\theta_2(0) \geq 1/2$, the point $(\mu_0, p_0)$ lies somewhere in the (orange) shaded region in Fig. \ref{fig:case2}.  Indeed, if $\theta_2(0)=1/2$, then $(\mu_0, p_0)$ is on the line segment between points $(0.5, 0.25)$ and $(2, 0.75)$. It is above this line segment if $\theta_2(0)>1/2$.   As before, the geometry of the problem ensures an optimal compensation vector $(\mu_1, p_1)$ lies somewhere on the line segment of the $h(\mu)$ curve between points $B$ and $C$ of Fig. \ref{fig:case2}.   As before, it holds that: 
\begin{eqnarray*}
B &=& X - (O(\epsilon), O(\epsilon)) \\
C &=& X + (O(\epsilon), O(\epsilon)) 
\end{eqnarray*}
and: 
\begin{eqnarray*}
\norm{(\overline{\mu}(t), \overline{p}(t)) - X} &\leq& O(\epsilon) \\
\norm{(\mu_1, p_1) - X} &\leq& O(\epsilon) \\
\norm{(\mu_0,p_0)-(\mu_1, p_1)} &\geq& \Theta(1) 
\end{eqnarray*}
As before, it follows that $t \geq \Omega(1/\epsilon)$.

\end{itemize} 

\section{The dynamic algorithm} \label{section:algorithm} 

This section shows that a simple drift-plus-penalty 
algorithm achieves $O(\log(1/\epsilon)/\epsilon)$ convergence time and $O(\log(1/\epsilon))$ average queue size.

\subsection{Problem structure} 

Without loss of generality, assume that $\pi(\omega_k)>0$ for all $k \in \{1, \ldots, M\}$ (else, remove $\omega_k$ from the set $\Omega$).  The value of $\pi(\omega_0)$ is possibly zero. 
For each real number $\mu$ in the interval $[0, \expect{\omega(t)}]$, define $h(\mu)$ as the minimum average 
power required to achieve an average transmission rate of $\mu$.  It is known that $p^*=h(\lambda)$.  Further, it is not difficult to show that 
$h(\mu)$ is non-decreasing, convex, and \emph{piecewise linear} with $h(0)=0$ and $h(\expect{\omega(t)})=1-\pi(\omega_0)$. 
The point $(0,0)$ is a vertex point of the piecewise linear curve $h(\mu)$. 
There are $M$ other vertex points, achieved by the $\omega$-only policies of the form: 
\begin{equation} \label{eq:thresh} p(t) =  \left\{ \begin{array}{ll}
1 &\mbox{ if $\omega(t) \geq \omega_k$} \\
0  & \mbox{ otherwise} 
\end{array}
\right. 
\end{equation} 
for $k \in \{1, \ldots, M\}$. 
This means that a vertex point is achieved by only using 
channel states $\omega(t)$ that are on or above a certain threshold $\omega_k$.    Lowering
the threshold value by selecting a smaller $\omega_k$ allows for a larger $\expect{\mu(t)}$ at the expense
of sometimes using less efficient channel states. 
The proof that this class of policies achieves the vertex points follows by a simple interchange argument that is omitted for brevity. 

For ease of notation, define $\omega_{M+1} \defequiv \infty$ and $\mu_{M+1}\defequiv0$. Let $\{\mu_1, \mu_2, \ldots, \mu_M, \mu_{M+1}\}$ be the set of transmission rates at which there are vertex points.  Specifically, for $k\in\{1, \ldots, M\}$, $\mu_k$ corresponds to the threshold $\omega_k$ in the policy \eqref{eq:thresh}.  That is: 
\begin{equation} \label{eq:mu-k}
 \mu_k \:  \defequiv \: \expect{\omega(t)|\omega(t) \geq \omega_k} = \sum_{i=k}^M \omega_i \pi(\omega_i) 
 \end{equation} 
Note that: 
\[ 0= \mu_{M+1} < \mu_M <  \mu_{M-1} < \cdots < \mu_1 = \expect{\omega(t)}  \]
It follows that $h(\mu_k)$ is the corresponding average power for vertex $k$, so that $(\mu_k, h(\mu_k))$ is a vertex point of the curve $h(\mu)$: 
\begin{equation} \label{eq:h-k}
 h(\mu_k) = Pr[\omega(t) \geq \omega_k] = \sum_{i=k}^M\pi(\omega_i) 
 \end{equation} 

The numbers $\{\mu_1, \mu_2, \ldots, \mu_M, \mu_{M+1}\}$
represent a set of measure 0 in the interval $[0, \expect{\omega(t)}]$.  
It is assumed that the arrival rate $\lambda$ is a number in $[0, \expect{\omega(t)}]$ that 
lies strictly between two points $\mu_{b+1}$ and $\mu_{b}$ for some index $b \in \{1, \ldots, M\}$.   That is: 
\[ \mu_{b+1} < \lambda < \mu_b \]
Thus, the point $(\lambda, h(\lambda))$ can be achieved by timesharing between the vertex points $(\mu_{b+1}, h(\mu_{b+1}))$ and $(\mu_{b}, h(\mu_{b}))$: 
\begin{eqnarray}
\lambda &=& \theta \mu_{b+1} + (1-\theta)\mu_{b} \label{eq:lambda-eq} \\
p^* = h(\lambda) &=& \theta h(\mu_{b+1}) + (1-\theta)h(\mu_{b}) \label{eq:p-eq} 
\end{eqnarray} 
for some probability $\theta$ that satisfies $0<\theta < 1$.  In particular: 
\[ \theta = \frac{\mu_b-\lambda}{\mu_{b} - \mu_{b+1}}    \]

\subsection{The drift-plus-penalty algorithm} 

For each slot $t \in \{0,1,2,\ldots\}$, define $L(t) = \frac{1}{2}Q(t)^2$ and $\Delta(t) = L(t+1) - L(t)$.   Let $V$ be a nonnegative real number.  The \emph{drift-plus-penalty} algorithm from \cite{sno-text}\cite{neely-energy-it} makes a power allocation decision that, every slot $t$, minimizes a bound on $\Delta(t) + Vp(t)$. The value $V$ can be chosen as desired and affects a performance tradeoff.  This technique is known to yield average queue size of $O(V)$ with deviation from optimal average power no more than $O(1/V)$ \cite{sno-text}\cite{neely-energy-it}.  This holds for general multi-queue networks.   By defining $\epsilon = 1/V$, this produces an $O(\epsilon)$ approximation with average queue size $O(1/\epsilon)$.  Further, it can be shown that convergence time is $O(1/\epsilon^2)$ (see Appendix D in \cite{dist-opt-arxiv}).  

In the context of the simple one-queue system of the current paper, the drift-plus-penalty algorithm reduces to the following:  Every slot $t$, observe $Q(t)$ and $\omega(t)$ and choose $p(t) \in \{0,1\}$ to minimize: 
\[ Vp(t) - Q(t)\omega(t)p(t) \]
That is, choose $p(t)$ according to the following rule:  
\begin{equation}\label{eq:dpp}  
p(t) = \left\{ \begin{array}{ll}
1 &\mbox{ if $Q(t)\omega(t) \geq V$} \\
0  & \mbox{ otherwise} 
\end{array}
\right.
\end{equation} 
The current paper shows that, for this special case of a system with only one queue, the above algorithm leads to  
an improved queue size and convergence time tradeoff.  

\subsection{The induced Markov chain}

The drift-plus-penalty algorithm induces a Markov structure on the system.  The system state is 
$Q(t)$ and the state space is the set of nonnegative real numbers.  Observe from \eqref{eq:dpp} that the drift-plus-penalty algorithm has the following behavior: 
\begin{itemize} 
\item $Q(t) \in [V/\omega_{b+1}, V/\omega_b) \implies$ $p(t) = 1$ if and only if $\omega(t) \geq \omega_{b+1}$.  In this case one has (from \eqref{eq:mu-k} and \eqref{eq:h-k}): 
\begin{eqnarray}
\expect{\mu(t) | Q(t) \in [V/\omega_{b+1}, V/\omega_b)} &=& \mu_{b+1} \label{eq:induce1} \\
\expect{p(t) | Q(t) \in [V/\omega_{b+1}, V/\omega_b)} &=& h(\mu_{b+1})\label{eq:induce2}
\end{eqnarray}
\item $Q(t) \in [V/\omega_b, V/\omega_{b-1}) \implies$ $p(t)=1$ if and only if $\omega(t) \geq \omega_b$.  In this case one has: 
\begin{eqnarray}
\expect{\mu(t) | Q(t) \in [V/\omega_{b}, V/\omega_{b-1})} &=& \mu_{b} \label{eq:induce3} \\
\expect{p(t) | Q(t) \in [V/\omega_{b}, V/\omega_{b-1})} &=& h(\mu_{b})\label{eq:induce4}
\end{eqnarray}
\end{itemize} 
where $V/0$ is defined as $\infty$ (in the case $\omega_{b-1}=\omega_0=0$), and 
$\omega_{M+1} = \infty$ so that $V/\omega_{M+1} = 0$. 

Now define intervals $\script{I}^{(1)}, \script{I}^{(2)}, \script{I}^{(3)}, \script{I}^{(4)}$ (see Fig. \ref{fig:drift-line}):
\begin{eqnarray*}
\script{I}^{(1)} &\defequiv& [0, V/\omega_{b+1}) \\
\script{I}^{(2)} &\defequiv& [V/\omega_{b+1}, V/\omega_b)\\
\script{I}^{(3)} &\defequiv& [V/\omega_b, V/\omega_{b-1})  \\
\script{I}^{(4)} &\defequiv& [V/\omega_{b-1}, \infty) 
\end{eqnarray*}
If $V/\omega_{b+1}=0$ then $\script{I}^{(1)}$ is defined as the empty set, and if $V/\omega_{b-1} = \infty$ then $\script{I}^{(4)}$ is defined as the empty set.   The equalities \eqref{eq:induce1}-\eqref{eq:induce4} can be rewritten as: 
\begin{eqnarray}
\expect{\mu(t) | Q(t) \in \script{I}^{(2)}} &=& \mu_{b+1} \label{eq:I-eq-1} \\
\expect{p(t) | Q(t) \in \script{I}^{(2)}} &=& h(\mu_{b+1})\label{eq:I-eq-2} \\
\expect{\mu(t) | Q(t) \in \script{I}^{(3)}} &=& \mu_b \label{eq:I-eq-3} \\
\expect{p(t) | Q(t) \in \script{I}^{(3)}} &=& h(\mu_{b}) \label{eq:I-eq-4} 
\end{eqnarray}

Recall that under the drift-plus-penalty algorithm \eqref{eq:dpp}, if 
$Q(t) \in \script{I}^{(2)}$ then the set of all $\omega(t)$ that lead to a transmission is equal to $\{\omega \in \Omega | \omega \geq \omega_{b+1}\}$. 
 If $Q(t) \in \script{I}^{(1)}$, then the set of all $\omega(t)$ that lead to a transmission depends on the particular value of $Q(t)$.   However, since interval $\script{I}^{(1)}$ is to the left of 
interval $\script{I}^{(2)}$, the set of all $\omega(t)$ that lead to a transmission when $Q(t) \in \script{I}^{(1)}$ is always a subset of $\{\omega \in \Omega | \omega \geq \omega_{b+1}\}$.  Similarly, since $\script{I}^{(4)}$ is to the right of $\script{I}^{(3)}$, the set of all $\omega(t)$ that lead to a transmission when $Q(t) \in \script{I}^{(4)}$ is a superset of the set of all $\omega(t)$ that lead to a  transmission when $Q(t) \in \script{I}^{(3)}$.  Therefore, under the 
drift-plus-penalty algorithm one has: 
\begin{eqnarray}
\expect{\mu(t) | Q(t) \in \script{I}^{(1)}} &\leq& \mu_{b+1} \label{eq:I-ineq-1}  \\
\expect{p(t) | Q(t) \in \script{I}^{(1)}} &\leq& h(\mu_{b+1})\label{eq:I-ineq-2} \\
\expect{\mu(t) | Q(t) \in \script{I}^{(4)}} &\geq& \mu_b \label{eq:I-ineq-3} \\
\expect{p(t) | Q(t) \in \script{I}^{(4)}} &\geq& h(\mu_b) \label{eq:I-ineq-4} 
\end{eqnarray}

For each $i \in \{1, 2, 3, 4\}$ define the indicator function: 
\[ 1\{Q(t) \in \script{I}^{(i)}\} = \left\{ \begin{array}{ll}
1 &\mbox{ if $Q(t) \in \script{I}^{(i)}$} \\
0  & \mbox{ otherwise} 
\end{array}
\right. \]
For each slot $t>0$ and each $i\in\{1,2,3,4\}$, define  $\overline{1}^{(i)}(t)$ as the expected fraction of time that $Q(t) \in \script{I}^{(i)}$: 
\[ \overline{1}^{(i)}(t) \defequiv \frac{1}{t}\sum_{\tau=0}^{t-1} \expect{1\{Q(t) \in \script{I}^{(i)}\}} \]
It follows that (using \eqref{eq:I-eq-2}, \eqref{eq:I-eq-4}, \eqref{eq:I-ineq-2}):
\begin{eqnarray}
\overline{p}(t) &\leq& \overline{1}^{(2)}(t) h(\mu_{b+1}) + \overline{1}^{(3)}(t)h(\mu_b) \nonumber \\
&& + \overline{1}^{(1)}(t)h(\mu_{b+1}) + \overline{1}^{(4)}(t) \label{eq:p-bound} 
\end{eqnarray}
where the final term follows because $p(t) \leq 1$ for all slots $t$.  Similarly (using \eqref{eq:I-eq-1}, \eqref{eq:I-eq-3}, \eqref{eq:I-ineq-1}): 
\begin{eqnarray}
\overline{\mu}(t) &\leq& \overline{1}^{(2)}(t)\mu_{b+1} + \overline{1}^{(3)}(t)\mu_b \nonumber \\
&& + \overline{1}^{(1)}(t)\mu_{b+1}+ \overline{1}^{(4)}(t)\expect{\omega(t)} \label{eq:mu-leq} 
\end{eqnarray}
where the final term follows because $\expect{\mu(t) | Q(t) \in \script{I}^{(4)}} \leq \expect{\omega(t)}$. Likewise (using 
\eqref{eq:I-eq-1}, \eqref{eq:I-eq-3}, \eqref{eq:I-ineq-3}): 
\begin{eqnarray}
\overline{\mu}(t) &\geq& \overline{1}^{(2)}(t)\mu_{b+1} + \overline{1}^{(3)}(t)\mu_b + \overline{1}^{(4)}(t)\mu_b\label{eq:mu-geq} 
\end{eqnarray}
which holds because $\expect{\mu(t) | Q(t) \in \script{I}^{(1)}} \geq 0$. 

\begin{figure}[t]
   \centering
   \includegraphics[width=3.5in]{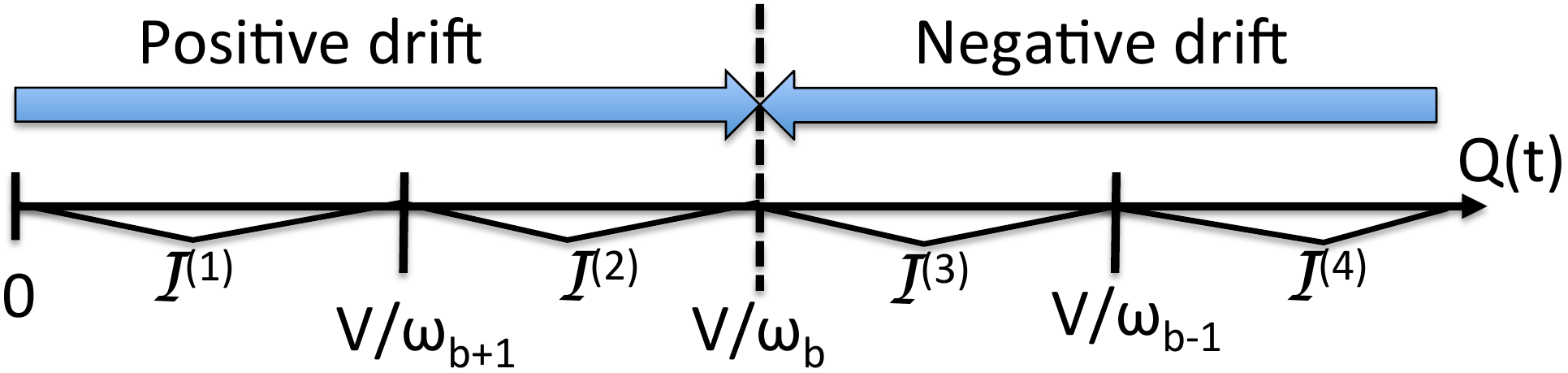} 
   \caption{An illustration of the four intervals $\script{I}_i$ for $i  \in \{1, 2, 3, 4\}$.}
   \label{fig:drift-line}
\end{figure}

In the next section it is shown that: 
\begin{itemize} 
\item $\overline{\mu}(t)$ is close to $\lambda$ when $t$ is sufficiently large. 
\item $\overline{1}^{(1)}(t)$ and $\overline{1}^{(4)}(t)$ are close to $0$ when $t$ and $V$ are sufficiently large. 
\item $\overline{1}^{(2)}(t)$ and $\overline{1}^{(3)}(t)$ are close to $\theta$ and $1-\theta$, respectively, 
when $t$ and $V$ are sufficiently large. 
\item $\overline{p}(t)$ is close to $p^*$ when $t$ and $V$ are sufficiently large. 
\end{itemize} 

Furthermore, to address the issue of convergence time, the notion of ``sufficiently large'' must be made precise.  
A key step 
is establishing bounds on the average queue size. 

\section{Analysis} 

\subsection{The distance between $\overline{\mu}(t)$ and $\lambda$} 

Recall that $\omega_M$ is the largest possible value of $\omega(t)$.  Assume that $V \geq \omega_M^2$. 

\begin{lem} \label{lem:remove-max} If $V \geq \omega_M^2$, then under the drift-plus-penalty algorithm: 

a) One has $p(t)=\mu(t)=0$ whenever 
$Q(t) < \omega_M$.  

b) The queueing equation \eqref{eq:q-update} can be replaced by the following for all slots $t \in \{0,1,2, \ldots\}$: 
\[ Q(t+1) = Q(t) + a(t) - \mu(t) \]
\end{lem}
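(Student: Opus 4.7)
The proof is essentially a direct unpacking of the drift-plus-penalty rule \eqref{eq:dpp} combined with the hypothesis $V \geq \omega_M^2$. I would handle the two parts in order.

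For part (a), the plan is to apply \eqref{eq:dpp} pointwise. Since $\omega(t) \leq \omega_M$ by definition of $\omega_M$, whenever $Q(t) < \omega_M$ we have
\[
Q(t)\omega(t) < \omega_M \cdot \omega_M = \omega_M^2 \leq V,
\]
so the threshold condition $Q(t)\omega(t) \geq V$ fails and \eqref{eq:dpp} gives $p(t)=0$. Because $\mu(t) = p(t)\omega(t)$, this immediately yields $\mu(t)=0$ as well.

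For part (b), the goal is to show that the truncation in \eqref{eq:q-update} is never active, i.e. that $Q(t) + a(t) - \mu(t) \geq 0$ on every slot. Since $a(t) \geq 0$, it suffices to prove $\mu(t) \leq Q(t)$. I would split on whether a transmission occurs. If $p(t) = 0$, then $\mu(t)=0 \leq Q(t)$ trivially. If $p(t)=1$, then \eqref{eq:dpp} gives $Q(t)\omega(t) \geq V \geq \omega_M^2$, so $Q(t) \geq \omega_M^2/\omega(t) \geq \omega_M \geq \omega(t) = \mu(t)$, using again that $\omega(t) \leq \omega_M$. Either way $\mu(t) \leq Q(t)$, so $\max[Q(t)+a(t)-\mu(t),0] = Q(t)+a(t)-\mu(t)$ and the claimed recursion holds. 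Note that the initial condition $Q(0)=0 < \omega_M$ is consistent with this reasoning through part (a), so no separate base case is needed.

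There is no real obstacle here; the only thing to be careful about is making sure the strict versus non-strict inequalities line up with the rule in \eqref{eq:dpp} (which uses $\geq V$ for transmission), so that the case $Q(t)\omega(t) = V$ is covered by the transmission branch rather than the non-transmission branch. This does not affect the argument since in that boundary case one still gets $Q(t) \geq V/\omega(t) \geq \omega_M \geq \mu(t)$.
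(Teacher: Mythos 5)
Your proof is correct and follows essentially the same route as the paper: part (a) is argued identically, and part (b) in both cases reduces to showing $\mu(t) \leq Q(t)$ so that the $\max[\cdot,0]$ truncation never activates. The only difference is cosmetic: the paper splits on $Q(t) < \omega_M$ versus $Q(t) \geq \omega_M$ (invoking part (a) in the first case and $\mu(t)\leq\omega_M$ in the second, with no division), whereas you split on $p(t)=0$ versus $p(t)=1$ and divide by $\omega(t)$ — which is safe because $p(t)=1$ under \eqref{eq:dpp} forces $Q(t)\omega(t)\geq V>0$ and hence $\omega(t)>0$, though it would have been worth flagging that edge case explicitly.
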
 
\begin{proof} 
Suppose $V \geq \omega_M^2$.  To prove (a), suppose that $Q(t) < \omega_M$. Since $\omega(t) \leq \omega_M$ for all $t$, one has: 
\begin{eqnarray*}
Q(t)\omega(t) &\leq& Q(t) \omega_M \\
&<& \omega_M^2 \\
&\leq& V
\end{eqnarray*}
and so the algorithm \eqref{eq:dpp} chooses $p(t)=0$, so that $\mu(t)$ is also $0$. This proves part (a).

To prove (b), note that part (a) implies $Q(t) \geq \mu(t)$ for all slots $t$. 
Indeed, this holds in the case $Q(t) < \omega_M$ (since part (a) ensures $\mu(t)=0$ in this case), 
and also holds in the case $Q(t) \geq \omega_M$ (since $\omega_M \geq \mu(t)$ always). 
Thus: 
\begin{eqnarray*}
Q(t+1) &=& \max[Q(t) + a(t) - \mu(t),0] \\
&=& Q(t) + a(t) - \mu(t) 
\end{eqnarray*}
\end{proof}

\begin{lem} \label{lem:mu-equality} If $V \geq \omega_M^2$ and $Q(0)=q_0$ with probability 1 (for some constant 
$q_0\geq 0$), then for every slot $t>0$: 
\[  \overline{\mu}(t) = \lambda - \expect{Q(t)- q_0}/t \]
\end{lem}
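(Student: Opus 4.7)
The plan is to exploit part (b) of Lemma \ref{lem:remove-max}, which removes the awkward $\max[\cdot,0]$ from the queue recursion under the hypothesis $V \geq \omega_M^2$. Once the update is linear, the proof is a one-line telescoping argument followed by taking expectations.

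First I would apply Lemma \ref{lem:remove-max}(b) to write $Q(\tau+1) - Q(\tau) = a(\tau) - \mu(\tau)$ for every slot $\tau \in \{0,1,2,\ldots\}$. Summing this identity from $\tau=0$ to $\tau=t-1$ telescopes the left-hand side, yielding
\[ Q(t) - q_0 = \sum_{\tau=0}^{t-1} a(\tau) - \sum_{\tau=0}^{t-1} \mu(\tau). \]
Next, I would take expectations of both sides. Since $\{a(\tau)\}$ is i.i.d.\ with mean $\lambda$, the arrival sum contributes exactly $t\lambda$. The service sum contributes $t \, \overline{\mu}(t)$ by the definition \eqref{eq:overline-def}. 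This gives
\[ \expect{Q(t) - q_0} = t\lambda - t\, \overline{\mu}(t). \]
Dividing through by $t$ and rearranging produces the claimed equality.

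There is essentially no main obstacle here, since all the work is done by Lemma \ref{lem:remove-max}(b): without the linearization of the queue recursion, telescoping would yield only the inequality $Q(t) - q_0 \leq \sum_\tau (a(\tau) - \mu(\tau))$ from which one could only conclude $\overline{\mu}(t) \leq \lambda - \expect{Q(t)-q_0}/t$. The assumption $V \geq \omega_M^2$ is exactly what ensures the queue is never driven to zero by an over-allocation of service (since the drift-plus-penalty rule \eqref{eq:dpp} idles whenever $Q(t) < \omega_M$), making the service-rate accounting exact rather than just a bound.
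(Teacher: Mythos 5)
Your proof is correct and follows essentially the same route as the paper: invoke Lemma \ref{lem:remove-max}(b) to linearize the queue update, telescope, take expectations, and divide by $t$. The only cosmetic difference is the order of summation and expectation, which is immaterial here.
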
 
\begin{proof} 
By Lemma \ref{lem:remove-max} one has for all slots $\tau  \in \{0, 1, 2, \ldots\}$: 
\[ Q(\tau+1) - Q(\tau) = a(\tau) - \mu(\tau) \]
Taking expectations gives: 
\[ \expect{Q(\tau+1)} - \expect{Q(\tau)} = \lambda - \expect{\mu(\tau)} \]
Summing the above over $\tau \in \{0, 1, 2, \ldots, t-1\}$ gives: 
\[ \expect{Q(t)} - \expect{Q(0)} = \lambda t - \sum_{\tau=0}^{t-1} \expect{\mu(\tau)} \]
Dividing by $t$ proves the result. 
\end{proof} 

The above lemma implies that if $V \geq \omega_M^2$, then 
$\overline{\mu}(t)$ converges to $\lambda$ whenever $\expect{Q(t)-q_0}/t$ converges to 0. 

\subsection{The distance between $\overline{1}^{(2)}(t)$ and $\theta$} 

The following lemma shows that if $\overline{1}^{(1)}(t)$, $\overline{1}^{(4)}(t)$, and $\expect{Q(t)-q_0}/t$ are close to $0$, then $\overline{1}^{(2)}(t)$ is close to $\theta$. 

\begin{lem} \label{lem:1bound} If $V\geq \omega_M^2$ and $Q(0)=q_0$ with probability 1 (for some constant $q_0 \geq 0$), 
then for all slots $t>0$: 
\begin{eqnarray*}
 \theta - \frac{[\mu_b \overline{1}^{(1)}(t)-\psi(t)]}{\mu_b - \mu_{b+1}} &\leq& \overline{1}^{(2)}(t)\\
 &\leq& \theta + \frac{\overline{1}^{(4)}(t)\expect{\omega(t)} + \psi(t)}{\mu_b-\mu_{b+1}}
 \end{eqnarray*}
 where $\psi(t)$ is defined: 
 \[ \psi(t) \defequiv \expect{Q(t)-q_0}/t \]
\end{lem}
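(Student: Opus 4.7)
The plan is to combine the sandwich bounds \eqref{eq:mu-leq}--\eqref{eq:mu-geq} on $\overline{\mu}(t)$ with the exact equality $\overline{\mu}(t) = \lambda - \psi(t)$ supplied by Lemma \ref{lem:mu-equality}, and then to solve the resulting inequalities for $\overline{1}^{(2)}(t)$. The only additional ingredient I would need is the partition identity $\overline{1}^{(1)}(t) + \overline{1}^{(2)}(t) + \overline{1}^{(3)}(t) + \overline{1}^{(4)}(t) = 1$, which holds since $Q(\tau) \geq 0$ for every $\tau$ and the intervals $\script{I}^{(1)},\script{I}^{(2)},\script{I}^{(3)},\script{I}^{(4)}$ partition $[0,\infty)$. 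I would use this identity to eliminate $\overline{1}^{(3)}(t) = 1 - \overline{1}^{(1)}(t) - \overline{1}^{(2)}(t) - \overline{1}^{(4)}(t)$ in both \eqref{eq:mu-leq} and \eqref{eq:mu-geq}. After this single substitution, $\overline{1}^{(2)}(t)$ appears in both inequalities with coefficient $\mu_{b+1} - \mu_b$, so rearranging and dividing through by the positive number $\mu_b - \mu_{b+1}$ produces a one-sided bound on $\overline{1}^{(2)}(t)$ from each direction.

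For the lower bound I would start from \eqref{eq:mu-geq}: after eliminating $\overline{1}^{(3)}(t)$ the $\overline{1}^{(4)}(t)$ terms cancel, and the substitution $\overline{\mu}(t) = \lambda - \psi(t)$ leads directly to $\overline{1}^{(2)}(t) \geq \frac{\mu_b - \lambda}{\mu_b - \mu_{b+1}} - \frac{\mu_b\overline{1}^{(1)}(t) - \psi(t)}{\mu_b - \mu_{b+1}}$. Invoking $\theta = (\mu_b - \lambda)/(\mu_b - \mu_{b+1})$ from \eqref{eq:lambda-eq} matches the stated lower bound verbatim. For the upper bound the same procedure applied to \eqref{eq:mu-leq} yields the tighter intermediate inequality $\overline{1}^{(2)}(t) \leq \theta - \overline{1}^{(1)}(t) + \frac{\psi(t) + \overline{1}^{(4)}(t)(\expect{\omega(t)} - \mu_b)}{\mu_b - \mu_{b+1}}$, and I would then relax by dropping the nonpositive term $-\overline{1}^{(1)}(t)$ and using $\expect{\omega(t)} - \mu_b \leq \expect{\omega(t)}$ to recover the stated bound.

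There is no real conceptual obstacle here; the proof is pure algebraic bookkeeping powered by Lemma \ref{lem:mu-equality}. The one place to be careful is keeping the sign correct when dividing by $\mu_b - \mu_{b+1} > 0$ and choosing the two final relaxations in the upper-bound step in the direction that preserves the inequality. Both choices are forced once the partition identity is used to eliminate $\overline{1}^{(3)}(t)$, so nothing subtle is required.
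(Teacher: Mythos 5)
Your proposal is correct and follows essentially the same route as the paper: apply Lemma \ref{lem:mu-equality} to replace $\overline{\mu}(t)$, plug into the sandwich bounds \eqref{eq:mu-leq}--\eqref{eq:mu-geq}, use the partition identity to eliminate $\overline{1}^{(3)}(t)$, and compare against $\lambda = \theta\mu_{b+1}+(1-\theta)\mu_b$. The two relaxations you identify at the end (dropping the nonpositive $-\overline{1}^{(1)}(t)$ term and using $\expect{\omega(t)}-\mu_b \le \expect{\omega(t)}$) are exactly what the paper does when it invokes $\mu_{b+1}<\mu_b$ to pass from \eqref{eq:dude2} to \eqref{eq:dude3}, just phrased before rather than after the division by $\mu_b-\mu_{b+1}$.
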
 

\begin{proof} 
Fix $t>0$.  Lemma \ref{lem:mu-equality} implies: 
\begin{eqnarray*}
\lambda &=& \overline{\mu}(t)  + \psi(t)\\
&\geq& \overline{1}^{(2)}(t)\mu_{b+1} + \overline{1}^{(3)}(t) \mu_b + \overline{1}^{(4)}(t)\mu_b + \psi(t) \\
&=&  \overline{1}^{(2)}(t)\mu_{b+1} + (1-\overline{1}^{(2)}(t))\mu_b - \overline{1}^{(1)}(t)\mu_b + \psi(t)
\end{eqnarray*}
where the first inequality holds by \eqref{eq:mu-geq}.  
Substituting the identity for $\lambda$ given in \eqref{eq:lambda-eq} into the above inequality gives: 
\begin{align*}
&\theta\mu_{b+1} + (1-\theta)\mu_b \\
&\geq  \overline{1}^{(2)}(t)\mu_{b+1} + (1-\overline{1}^{(2)}(t))\mu_b - \overline{1}^{(1)}(t)\mu_b + \psi(t) 
\end{align*}
Rearranging terms proves that: 
\[ \theta - \frac{[\mu_b \overline{1}^{(1)}(t)-\psi(t)]}{\mu_b - \mu_{b+1}} \leq \overline{1}^{(2)}(t)  \]

To prove the second inequality, note that: 
\begin{eqnarray}
\lambda &=& \overline{\mu}(t) + \psi(t) \label{eq:dude1} \\
&\leq& \overline{1}^{(2)}(t) \mu_{b+1} + \overline{1}^{(3)}(t)\mu_b + \overline{1}^{(1)}(t)\mu_{b+1} \nonumber \\
&& + \overline{1}^{(4)}(t)\expect{\omega(t)} + \psi(t)  \label{eq:dude2} \\
&=& \overline{1}^{(2)}(t) \mu_{b+1} + (1-\overline{1}^{(2)}(t))\mu_b \nonumber \\
&& - \overline{1}^{(1)}(t)\mu_b - \overline{1}^{(4)}(t)\mu_b + \overline{1}^{(1)}(t)\mu_{b+1}  \nonumber \\
&&  + \overline{1}^{(4)}(t)\expect{\omega(t)} +  \psi(t)  \nonumber \\
&\leq& \overline{1}^{(2)}(t) \mu_{b+1} + (1-\overline{1}^{(2)}(t))\mu_b \nonumber \\
&&+ \overline{1}^{(4)}(t)\expect{\omega(t)} + \psi(t) \label{eq:dude3}
\end{eqnarray} 
where \eqref{eq:dude1} holds by Lemma \ref{lem:mu-equality}, \eqref{eq:dude2} holds by \eqref{eq:mu-leq}, 
and  \eqref{eq:dude3} holds because $\mu_{b+1} < \mu_b$.   Substituting the identity for 
$\lambda$ given in \eqref{eq:lambda-eq} gives: 
\begin{eqnarray*}
\theta\mu_{b+1} + (1-\theta)\mu_b &\leq& \overline{1}^{(2)}(t)\mu_{b+1} + (1-\overline{1}^{(2)}(t))\mu_b \\
&& + \overline{1}^{(4)}(t)\expect{\omega(t)} +\psi(t)
\end{eqnarray*}
Rearranging terms proves the result. 
\end{proof}

\subsection{Positive and negative drift} 

Define $\expect{Q(t+1) - Q(t)|Q(t)}$ as the \emph{conditional drift}.  Assume that $V\geq \omega_M^2$, 
so that Lemma \ref{lem:remove-max} implies $Q(t+1) - Q(t) = a(t) - \mu(t)$ for all slots $t$.  Thus: 
\begin{eqnarray*}
\expect{Q(t+1)-Q(t)|Q(t)} &=& \expect{a(t) - \mu(t) | Q(t)} \\
&=& \lambda - \expect{\mu(t)|Q(t)} 
\end{eqnarray*}
where the final equality follows because $a(t)$ is independent of $Q(t)$. 
From \eqref{eq:I-eq-1} and \eqref{eq:I-ineq-1} one has for all slots $t$: 
\begin{eqnarray*}
\expect{\mu(t) | Q(t) < V/\omega_b} \leq \mu_{b+1}
\end{eqnarray*}
Likewise, from \eqref{eq:I-eq-3} and \eqref{eq:I-ineq-3} one has: 
\begin{eqnarray*}
\expect{\mu(t) | Q(t) \geq V/\omega_b} \geq \mu_{b}  
\end{eqnarray*}
Define positive constants $\beta_L$ and $\beta_R$ (associated with drift when $Q(t)$ is to the \emph{Left} and \emph{Right} of the threshold $V/\omega_b$) by: 
\begin{eqnarray*}
\beta_L \defequiv \lambda - \mu_{b+1} \: \: \: \: , \: \: \: \: 
\beta_R \defequiv \mu_b - \lambda
\end{eqnarray*}
It follows that: 
\begin{eqnarray}
\expect{Q(t+1) - Q(t) | Q(t)} \geq \beta_L &   \mbox{ if $Q(t) < V/\omega_b$} \label{eq:pos-drift-Q} \\
\expect{Q(t+1) - Q(t) | Q(t)} \leq -\beta_R &  \mbox{ if $Q(t) \geq V/\omega_b$} \label{eq:neg-drift-Q}
\end{eqnarray} 
In particular, the system has \emph{positive drift} if $Q(t)< V/\omega_b$, and \emph{negative drift} otherwise (see Fig. \ref{fig:drift-line}). 

\subsection{A basic drift lemma} 

Consider a real-valued random process $Z(t)$ over slots $t \in \{0, 1, 2, \ldots\}$.  
The following drift lemma is similar in spirit 
to results in \cite{martingale-concentration}\cite{longbo-lagrange-tac}, but focuses on a finite time horizon with an arbitrary  initial condition $Z(0)=z_0$ (rather than on steady state), and on expectations at a given time (rather than time averages).    These distinctions are crucial to convergence time analysis. The lemma will be applied using $Z(t) = Q(t)$ for bounds on average queue size and on $\overline{1}^{(4)}(t)$.  It will then be applied using 
$Z(t) = V/\omega_b-Q(t)$ to bound $\overline{1}^{(1)}(t)$.  Assume there is a constant $\delta_{max}>0$ such that with probability 1:

\begin{equation} \label{eq:z-delta} 
|Z(t+1) - Z(t)| \leq \delta_{max} \: \: \forall t \in \{0, 1, 2, \ldots\} 
\end{equation} 
Suppose  there are constants $\theta \in \mathbb{R}$ and $\beta>0$ such that: 
\begin{equation} \label{eq:drift-lem-cond} 
\expect{Z(t+1) - Z(t) | Z(t)} \leq  \left\{ \begin{array}{ll}
\delta_{max} &\mbox{ if $Z(t) < \theta$} \\
-\beta  & \mbox{ if $Z(t) \geq \theta$} 
\end{array}
\right.
\end{equation} 
Note that if \eqref{eq:z-delta} holds then \eqref{eq:drift-lem-cond} automatically holds for the special case $Z(t)<\theta$. Thus, the negative drift case $Z(t) \geq \theta$ is the important case for condition \eqref{eq:drift-lem-cond}.  Further, if 
 \eqref{eq:z-delta}-\eqref{eq:drift-lem-cond} both hold, then the constant $\beta$ necessarily satisfies: 
\[ 0 < \beta \leq \delta_{max} \] 

\begin{lem} \label{lem:drift} 
Suppose $Z(t)$ is a random process that 
satisfies \eqref{eq:z-delta}-\eqref{eq:drift-lem-cond} for given constants $\theta$, $\delta_{max}$, $\beta$ (with $\theta\in \mathbb{R}$ and 
$0 < \beta \leq \delta_{max}$).  Suppose $Z(0)=z_0$ for some $z_0\in\mathbb{R}$. 
Then for every slot $t\geq0$ the following holds: 
\begin{eqnarray} 
\expect{e^{rZ(t)}} \leq D + \left(e^{rz_0}-D\right)\rho^t   \label{eq:nice-ineq} 
  \end{eqnarray} 
where constants $r$, $\rho$, $D$ are defined: 
\begin{eqnarray} 
r &\defequiv& \frac{\beta}{\delta_{max}^2 + \delta_{max}\beta/3} \label{eq:r}  \\
\rho &\defequiv& 1 - r\beta/2 \label{eq:rho} \\
D &\defequiv& \frac{(e^{r\delta_{max}} - \rho) e^{r\theta}}{1-\rho} \label{eq:D}
\end{eqnarray} 
\end{lem}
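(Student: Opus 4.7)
The plan is to use the exponential Lyapunov function $V(z) = e^{rz}$ and derive a scalar one-step recursion of the form $a_{t+1} \leq \rho\, a_t + C$ for $a_t \defequiv \expect{e^{rZ(t)}}$, where $C = (e^{r\delta_{max}}-\rho)e^{r\theta}$. Iterating this linear inequality in closed form yields $a_t \leq \rho^t e^{rz_0} + C(1-\rho^t)/(1-\rho)$, which becomes precisely \eqref{eq:nice-ineq} upon identifying $D = C/(1-\rho)$ as in \eqref{eq:D}.

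To obtain the one-step bound I would split on whether $Z(t)$ lies in the negative-drift region $\{Z(t)\geq \theta\}$ or the positive-drift region $\{Z(t)<\theta\}$. In the negative-drift case, I would apply the Bennett-style scalar inequality $e^y \leq 1 + y + y^2/(2(1 - r\delta_{max}/3))$ to $y = r(Z(t+1)-Z(t))$; this bound holds for $|y|\leq r\delta_{max}$ provided $r\delta_{max}<3$, which is a direct verification using $\beta \leq \delta_{max}$ (one gets $r\delta_{max}\leq 3/4$, so also $\rho\in(1/2,1)$). Conditional expectation then gives
\[ \expect{e^{r(Z(t+1)-Z(t))}\mid Z(t)} \;\leq\; 1 - r\beta + \frac{r^2\delta_{max}^2}{2(1-r\delta_{max}/3)}, \]
and the defining algebraic choice $r = \beta/(\delta_{max}^2 + \delta_{max}\beta/3)$ from \eqref{eq:r} is engineered to make the right-hand side exactly $1 - r\beta/2 = \rho$, which yields $\expect{e^{rZ(t+1)}\mid Z(t)} \leq \rho\, e^{rZ(t)}$.

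In the positive-drift case $Z(t) < \theta$, the only ingredient needed is the increment bound \eqref{eq:z-delta}, which forces $Z(t+1) \leq \theta + \delta_{max}$ and hence $e^{rZ(t+1)} \leq e^{r(\theta+\delta_{max})}$. The two cases can then be merged into a single inequality
\[ \expect{e^{rZ(t+1)} \mid Z(t)} \;\leq\; \rho\, e^{rZ(t)} + (e^{r\delta_{max}}-\rho)\, e^{r\theta}, \]
valid for every value of $Z(t)$: when $Z(t)\geq \theta$ the additive term is pure slack, and when $Z(t)<\theta$ the monotonicity $e^{rZ(t)} \leq e^{r\theta}$ together with $e^{r\delta_{max}}>1>\rho$ absorbs the difference. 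Taking unconditional expectations produces the scalar recursion $a_{t+1}\leq \rho a_t + C$, and a one-line induction unrolls it into \eqref{eq:nice-ineq}.

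The one genuine obstacle is the Bennett-style algebraic manipulation: getting the clean contraction factor $\rho = 1 - r\beta/2$ rather than a weaker $1 - r\beta + O(r^2)$ requires the exact form of $r$ in \eqref{eq:r} and care with the second-order term $y^2/(2(1-y/3))$ across both signs of the increment (for $y\leq 0$ the cruder $e^y \leq 1+y+y^2/2$ is enough and is dominated by the same uniform bound). Everything else—the case split, the inequality $e^{r\delta_{max}} > \rho$ that keeps $D \geq e^{r\theta}$, and the geometric-series step that turns the recursion into the closed-form \eqref{eq:nice-ineq}—is routine bookkeeping.
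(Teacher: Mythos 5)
Your overall strategy matches the paper's essentially line for line: the exponential Lyapunov function $e^{rZ(t)}$, the Bennett-style bound \eqref{eq:e-ineq} from \cite{martingale-concentration}, the choice of $r$ in \eqref{eq:r} tuned to make the second-order term equal $r\beta/2$, the contraction factor $\rho$, the case split on $\{Z(t)\geq\theta\}$ versus $\{Z(t)<\theta\}$, and the unrolling of the one-step affine recursion into \eqref{eq:nice-ineq}. Your numerics along the way ($r\delta_{max}\leq 3/4$, hence $\rho\in(1/2,1)$; the exact equality $\frac{(r\delta_{max})^2}{2(1-r\delta_{max}/3)}=r\beta/2$ at the choice \eqref{eq:r}) are also correct.

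There is, however, a real gap in how you handle the positive-drift region. You claim the only ingredient needed when $Z(t)<\theta$ is $Z(t+1)\leq\theta+\delta_{max}$, hence $e^{rZ(t+1)}\leq e^{r(\theta+\delta_{max})}$, and that this yields the merged bound $\expect{e^{rZ(t+1)}\mid Z(t)}\leq \rho e^{rZ(t)} + (e^{r\delta_{max}}-\rho)e^{r\theta}$. But for $Z(t)<\theta$ the right-hand side equals $e^{r(\theta+\delta_{max})}-\rho\bigl(e^{r\theta}-e^{rZ(t)}\bigr)$, which is \emph{strictly less} than $e^{r(\theta+\delta_{max})}$; so the uniform bound you invoke is on the wrong side and does not imply the merged inequality. (Concretely, if $Pr[Z(t)=\theta-10^6]=1$ the right side is $\approx (e^{r\delta_{max}}-\rho)e^{r\theta}$, yet your bound on the left side is stuck at $e^{r(\theta+\delta_{max})}$, which is larger.) The monotonicity $e^{rZ(t)}\leq e^{r\theta}$ you then appeal to pushes the \emph{right-hand} side down, not the left-hand side — it makes the desired inequality harder, not easier. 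The fix is to keep the dependence on $Z(t)$: use $Z(t+1)\leq Z(t)+\delta_{max}$ (not $Z(t+1)\leq\theta+\delta_{max}$) to get $\expect{e^{rZ(t+1)}\mid Z(t)}\leq e^{r\delta_{max}}e^{rZ(t)} = \rho e^{rZ(t)} + (e^{r\delta_{max}}-\rho)e^{rZ(t)}$, and only \emph{then} bound the second term via $e^{rZ(t)}\leq e^{r\theta}$ and $e^{r\delta_{max}}-\rho>0$. This is precisely what the paper does (it leaves Case~2 at $\expect{e^{rZ(t+1)}\mid Z(t)}\leq e^{r\delta_{max}}e^{rZ(t)}$ and performs the $e^{rZ(t)}\leq e^{r\theta}$ substitution only inside the unconditional combination). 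With that correction your merged pointwise inequality is in fact valid and the rest of your argument — taking unconditional expectations to get $a_{t+1}\leq\rho a_t + C$ and summing the geometric series with $D=C/(1-\rho)$ — goes through and recovers \eqref{eq:nice-ineq}.
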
 

Note that the property $0 < \beta \leq \delta_{max}$ can be used to show that $0 < \rho < 1$. 

\begin{proof} (Lemma \ref{lem:drift}) 
The proof is by induction. The inequality \eqref{eq:nice-ineq} trivially holds for $t=0$. Suppose \eqref{eq:nice-ineq} 
holds at some slot $t \geq 0$.  The goal is to show that it also holds on slot $t+1$. 
Let $r$ be a positive 
number that satisfies $0 <  r\delta_{max} < 3$.  It is known from results in \cite{martingale-concentration}
that for any real number $x$ that satisfies $|x|\leq \delta_{max}$: 
\begin{equation} \label{eq:e-ineq}
 e^{rx}  \leq 1 + rx + \frac{(r\delta_{max})^2}{2(1-r\delta_{max}/3)} 
 \end{equation} 
 Define $\delta(t) = Z(t+1) - Z(t)$ and note that $|\delta(t)|\leq \delta_{max}$ for all $t$. 
Then: 
\begin{eqnarray}
e^{rZ(t+1)} &=& e^{rZ(t)}e^{r\delta(t)}  \nonumber \\
&\leq& e^{rZ(t)}\left[1 + r\delta(t) +  \frac{(r\delta_{max})^2}{2(1-r\delta_{max}/3)}\right]  \label{eq:baz1} 
\end{eqnarray}
where the final inequality holds by \eqref{eq:e-ineq}. Choose $r$ such that: 
\begin{equation} \label{eq:choose-r} 
\frac{(r\delta_{max})^2}{2(1-r\delta_{max}/3)} \leq \frac{r\beta}{2} 
\end{equation} 
It is not difficult to show that the value of $r$ given in \eqref{eq:r} simultaneously satisfies \eqref{eq:choose-r} and  $0<r\delta_{max} < 3$. 
For this value of $r$, substituting \eqref{eq:choose-r} into \eqref{eq:baz1} gives: 
\begin{eqnarray}
e^{rZ(t+1)}  &\leq&  e^{rZ(t)}\left[1 + r\delta(t) +\frac{r\beta}{2}\right] \label{eq:yup} 
\end{eqnarray}
Now consider the following two cases: 
\begin{itemize} 
\item Case 1:  Suppose $Z(t) \geq \theta$.  Taking conditional expectations of 
\eqref{eq:yup} gives: 
\begin{eqnarray}
\expect{e^{rZ(t+1)}|Z(t)} &\leq& \expect{e^{rZ(t)}[1 + r\delta(t) + \frac{r\beta}{2}]|Z(t)} \nonumber \\
&\leq& e^{rZ(t)}[1- r\beta + \frac{r\beta}{2}] \label{eq:yup1} \\
&=& e^{rZ(t)}\rho \nonumber
\end{eqnarray}
where \eqref{eq:yup1} follows by \eqref{eq:drift-lem-cond}, and the final equality holds by 
definition of $\rho$ in 
\eqref{eq:rho}. 
\item Case 2:  Suppose $Z(t) < \theta$.  Then: 
\begin{eqnarray*}
\expect{e^{rZ(t+1)}| Z(t)} &=& \expect{e^{rZ(t)}e^{r\delta(t)}} \\
&\leq&e^{rZ(t)} e^{r\delta_{max}} 
\end{eqnarray*}
\end{itemize} 
Putting these two cases together gives: 
\begin{eqnarray*}
&&\hspace{-.4in}\expect{e^{rZ(t+1)}} \\
&\leq& \rho\expect{e^{rZ(t)}| Z(t) \geq \theta}Pr[Z(t)\geq \theta] \\
&&+ e^{r\delta_{max}}\expect{e^{rZ(t)}|Z(t)<\theta}Pr[Z(t)<\theta]\\
&=&\rho \expect{e^{rZ(t)}} \\
&& + (e^{r\delta_{max}}-\rho) \expect{e^{rZ(t)}|Z(t)<\theta}Pr[Z(t)<\theta] \\
&\leq& \rho\expect{e^{rZ(t)}} + (e^{r\delta_{max}} - \rho)e^{r\theta}
\end{eqnarray*}
where the final inequality uses the fact that $e^{r\delta_{max}}>  1 > \rho$.   By the induction assumption it is known that  \eqref{eq:nice-ineq} holds on slot $t$.  Substituting 
 \eqref{eq:nice-ineq} into the 
 right-hand-side of the above inequality gives: 
 \begin{eqnarray*}
\expect{e^{rZ(t+1)}} &\leq& \rho\left[D + \left(e^{rz_0}-D\right)\rho^t\right] \\
&& +  (e^{r\delta_{max}} - \rho)e^{r\theta} \\
 &=& D + \left(e^{rz_0} - D\right)\rho^{t+1} 
 \end{eqnarray*}
 where the final equality holds by the definition of $D$ in \eqref{eq:D}. This completes the induction step.
\end{proof} 

Let $1\{Z(\tau) \geq \theta + c\}$ be an indicator function that is 1 if $Z(\tau) \geq \theta + c$, and $0$ else. The next corollary shows that the 
expected fraction of time that this indicator is 1 decays exponentially in $c$.

\begin{cor} \label{corollary} If the  assumptions of Lemma \ref{lem:drift} hold, then  for any $c>0$ and any slots $T$ and $t$ that
satisfy $0 \leq T <t$: 
\begin{align}
&\frac{1}{t}\sum_{\tau=0}^{t-1} \expect{1\{Z(\tau)\geq \theta + c\}} \nonumber \\
&\leq \frac{(e^{r\delta_{max}}-\rho)e^{-rc}}{1-\rho} +\left[\frac{T}{t} + \frac{e^{r(z_0-c-\theta)}\rho^T}{t(1-\rho)}\right]\label{eq:cor2a} 
\end{align}
where $r$ and $\rho$ are defined in \eqref{eq:r}-\eqref{eq:rho}.  Further, if $z_0 \leq \theta$ then for any $t>0$: 
\begin{align}
\frac{1}{t}\sum_{\tau=0}^{t-1} \expect{1\{Z(\tau)\geq \theta + c\}} \leq \frac{e^{-rc} (e^{r\delta_{max}}-\rho+1/t)}{(1-\rho)} \label{eq:cor2b}
\end{align}
\end{cor}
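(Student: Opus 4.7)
The plan is to combine Lemma \ref{lem:drift} with Markov's inequality applied to the exponential $e^{rZ(\tau)}$, and then split the sum over $\tau$ into an ``early'' and ``late'' piece at the chosen cutoff $T$.

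First I would note that for each $\tau$,
\[
\expect{1\{Z(\tau)\geq \theta+c\}} = \Pr[e^{rZ(\tau)} \geq e^{r(\theta+c)}] \leq e^{-r(\theta+c)}\expect{e^{rZ(\tau)}},
\]
and then substitute the bound from Lemma \ref{lem:drift}, namely $\expect{e^{rZ(\tau)}} \leq D + (e^{rz_0}-D)\rho^\tau$. Since $D > 0$, the coefficient $e^{rz_0}-D$ admits the crude but sign-insensitive upper bound $e^{rz_0}-D \leq e^{rz_0}$, which is key because one cannot in general control the sign of $e^{rz_0}-D$. This yields the pointwise estimate $\expect{1\{Z(\tau)\geq \theta+c\}} \leq De^{-r(\theta+c)} + e^{r(z_0-\theta-c)}\rho^\tau$.

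Next I would split the sum at the free index $T$. For $\tau \in \{0,\ldots,T-1\}$ I would use the trivial bound $\expect{1\{Z(\tau)\geq \theta+c\}} \leq 1$, contributing $T$. For $\tau \in \{T,\ldots,t-1\}$, I would apply the pointwise exponential bound and sum: the constant term contributes at most $tDe^{-r(\theta+c)}$, and the geometric piece contributes at most $e^{r(z_0-\theta-c)}\sum_{\tau=T}^{t-1}\rho^\tau \leq e^{r(z_0-\theta-c)}\rho^T/(1-\rho)$. Dividing by $t$ and simplifying $De^{-r(\theta+c)} = (e^{r\delta_{max}}-\rho)e^{-rc}/(1-\rho)$ via definition (\ref{eq:D}) gives exactly (\ref{eq:cor2a}).

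For (\ref{eq:cor2b}), I would specialize the first inequality to $T=0$, reducing the correction bracket to $e^{r(z_0-c-\theta)}/(t(1-\rho))$. Under the hypothesis $z_0 \leq \theta$, one has $e^{r(z_0-\theta-c)} \leq e^{-rc}$, so the two terms can be combined over the common denominator $1-\rho$, yielding $e^{-rc}(e^{r\delta_{max}}-\rho + 1/t)/(1-\rho)$. No real obstacle appears here; the only subtlety is the one already handled in the general part, namely that $e^{rz_0}-D$ can change sign, which the simple upper bound $e^{rz_0}-D \leq e^{rz_0}$ sidesteps without losing more than a constant in the exponentially decaying term.
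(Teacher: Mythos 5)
Your proposal is correct and follows essentially the same path as the paper's own proof: both apply the exponential Chernoff/Markov bound $\expect{1\{Z(\tau)\geq\theta+c\}}\leq e^{-r(\theta+c)}\expect{e^{rZ(\tau)}}$, substitute the moment estimate from Lemma~\ref{lem:drift}, split the sum at $T$ using the trivial bound $1$ on the early terms, sum the geometric tail, and then obtain \eqref{eq:cor2b} by setting $T=0$ and using $z_0\leq\theta$. The only cosmetic difference is that you absorb the sign of $e^{rz_0}-D$ before summing, whereas the paper does so after; the resulting bounds are identical.
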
 

The intuition behind the right-hand-side of \eqref{eq:cor2a} is that the first term  represents a ``steady state'' bound as $t\rightarrow\infty$, which decays like $e^{-rc}$.  The last two terms (in brackets) are due to the transient effect of the initial condition $z_0$.  This transient can be significant when $z_0 > \theta$.  In that case, $e^{r(z_0-c-\theta)}$ might be large, and a time $T$ is required to shrink this term by multiplication with the  factor $\rho^T$. 

\begin{proof} (Corollary \ref{corollary}) 
One has for $t >T$: 
\begin{align}
\sum_{\tau=0}^{t-1} \expect{1\{Z(\tau) \geq \theta + c\}}  \leq T + \sum_{\tau=T}^{t-1} \expect{1\{Z(\tau) \geq \theta + c\}} \label{eq:cor1-here} 
\end{align}
However, for every slot $\tau\geq0$ one has:  
\[  e^{rZ(\tau)} \geq e^{r(\theta+c)}1\{Z(\tau)\geq \theta + c\}   \]
Taking expectations of both sides gives: 
\[ \expect{e^{rZ(\tau)}} \geq e^{r(\theta + c)} \expect{1\{Z(\tau) \geq \theta + c\}} \]
Rearranging the above shows that for every slot $\tau\geq0$: 
\begin{eqnarray*}
\expect{1\{Z(\tau) \geq \theta + c\}} &\leq& e^{-r(\theta+c)}\expect{e^{rZ(\tau)}} \\
&\leq& e^{-r(\theta+c)}[D + (e^{rz_0} - D)\rho^\tau)]
\end{eqnarray*}
where the final inequality uses \eqref{eq:nice-ineq}.  Substituting the above inequality into the right-hand-side of 
\eqref{eq:cor1-here} gives: 
\begin{align}
&\sum_{\tau=0}^{t-1} \expect{1\{Z(\tau) \geq \theta + c\}} \nonumber \\
&\leq T + e^{-r(\theta+c)}\sum_{\tau=T}^{t-1}\left[D + (e^{rz_0}-D)\rho^{\tau}  \right] \nonumber \\
&= T + e^{-r(\theta+c)}\left[(t-T)D + (e^{rz_0}-D)\rho^T\frac{(1-\rho^{t-T})}{(1-\rho)}\right] \nonumber \\
&\leq T + e^{-r(\theta+c)}\left[tD + \frac{e^{rz_0}\rho^T}{(1-\rho)}\right] \nonumber 
\end{align}
Dividing by $t$ and substituting the definition of $D$ proves  \eqref{eq:cor2a}.  Inequality \eqref{eq:cor2b} follows immediately from \eqref{eq:cor2a} by choosing $T=0$. 
\end{proof} 

\subsection{Bounding $\expect{Q(t)}$ and $\overline{1}^{(4)}(t)$} 

Let $Q(t)$ be the backlog process under the drift-plus-penalty algorithm. Assume that $V \geq \omega_M^2$ and the initial condition is $Q(0)=q_0$ for 
some constant $q_0$. Define $\delta_{max} \defequiv \max[\omega_M, a_{max}]$ as the largest possible change in $Q(t)$ over one slot, so that: 
\[ |Q(t+1)-Q(t)| \leq \delta_{max} \: \: \forall t \in \{0, 1,2, \ldots\} \]
From \eqref{eq:neg-drift-Q} it holds that: 
\[ \expect{Q(t+1) - Q(t)|Q(t)} \leq  \left\{ \begin{array}{ll}
\delta_{max} &\mbox{ if $Q(t) < V/\omega_b$} \\
-\beta_R  & \mbox{ if $Q(t) \geq V/\omega_b$} 
\end{array}
\right.
\]
It follows that the process $Q(t)$ satisfies the conditions \eqref{eq:z-delta}-\eqref{eq:drift-lem-cond} required for Lemma \ref{lem:drift}. 
Specifically, define $Z(t) = Q(t)$, $z_0=q_0$, $\theta = V/\omega_b$, $\beta = \beta_R$.

\begin{lem} \label{lem:q-bound}  If $0 \leq q_0 \leq V/\omega_b$ and  $V \geq \omega_M^2$, then for all slots $t\geq0$
one has: 
\begin{equation*}  
 \expect{Q(t)} \leq \frac{V}{\omega_b} + \frac{1}{r_R} \log\left(1 + \frac{e^{r_R\delta_{max}}-\rho_R}{1-\rho_R}\right) = O(V)
\end{equation*} 
where constants $r_R$ and $\rho_R$ are defined: 
\begin{eqnarray} 
r_R &\defequiv& \frac{\beta_R}{\delta_{max}^2 + \delta_{max}\beta_R/3}  \label{eq:r4} \\
\rho_R &\defequiv& 1 - r_R\beta_R/2 \label{eq:rho4} 
\end{eqnarray} 
\end{lem}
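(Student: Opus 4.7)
The plan is to apply Lemma \ref{lem:drift} to the process $Z(t) = Q(t)$, with parameters $z_0 = q_0$, $\theta = V/\omega_b$, $\beta = \beta_R$, and $\delta_{max} = \max[\omega_M, a_{max}]$. The excerpt has already verified that $Q(t)$ satisfies the boundedness condition \eqref{eq:z-delta} and the drift condition \eqref{eq:drift-lem-cond} with these parameters (using $V \geq \omega_M^2$ so that Lemma \ref{lem:remove-max} yields the true additive recursion needed for the drift bound \eqref{eq:neg-drift-Q}). With $r_R, \rho_R$ defined as in \eqref{eq:r4}-\eqref{eq:rho4}, the corresponding $D$ constant is $D_R = (e^{r_R \delta_{max}} - \rho_R)e^{r_R V/\omega_b}/(1-\rho_R)$, and Lemma \ref{lem:drift} gives
\[
\expect{e^{r_R Q(t)}} \leq D_R + (e^{r_R q_0} - D_R)\rho_R^t \qquad \forall t \geq 0.
\]

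The first key step is to eliminate the transient term. Since $e^{r_R \delta_{max}} \geq 1$, one has $(e^{r_R\delta_{max}} - \rho_R)/(1-\rho_R) \geq 1$, hence $D_R \geq e^{r_R V/\omega_b} \geq e^{r_R q_0}$ (using the hypothesis $q_0 \leq V/\omega_b$). Therefore $e^{r_R q_0} - D_R \leq 0$, and since $\rho_R^t \geq 0$, the bound collapses to
\[
\expect{e^{r_R Q(t)}} \leq D_R \qquad \forall t \geq 0.
\]

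The second key step is to convert this exponential-moment bound into a bound on $\expect{Q(t)}$ in the exact form claimed. The trick is to use the pointwise inequality $(x)^+ \leq \frac{1}{r_R}\log(1 + e^{r_R x})$, valid for all real $x$ (for $x \geq 0$ it follows from $e^{r_R x} \leq 1 + e^{r_R x}$; for $x < 0$ the right side is positive while the left is zero). Applying this with $x = Q(t) - V/\omega_b$ and using $Q(t) \leq V/\omega_b + (Q(t) - V/\omega_b)^+$ gives
\[
Q(t) \leq \frac{V}{\omega_b} + \frac{1}{r_R}\log\!\left(1 + e^{r_R(Q(t) - V/\omega_b)}\right).
\]
Taking expectations and applying Jensen's inequality to the concave function $\log(\cdot)$ yields
\[
\expect{Q(t)} \leq \frac{V}{\omega_b} + \frac{1}{r_R}\log\!\left(1 + e^{-r_R V/\omega_b}\expect{e^{r_R Q(t)}}\right),
\]
and substituting $\expect{e^{r_R Q(t)}} \leq D_R$ produces exactly the claimed bound. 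The $O(V)$ statement then follows because $r_R$, $\rho_R$, $\delta_{max}$, $\beta_R$ are all constants independent of $V$, so the log term is $O(1)$.

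The main obstacle is really just finding the right pointwise inequality to match the stated form with the ``$1+$'' inside the logarithm: a naive application of Jensen directly to $e^{r_R Q(t)}$ would produce $\expect{Q(t)} \leq V/\omega_b + \frac{1}{r_R}\log\frac{e^{r_R\delta_{max}} - \rho_R}{1 - \rho_R}$, which is a valid but slightly different bound. The inequality $(x)^+ \leq \frac{1}{r}\log(1 + e^{rx})$ together with concavity of $\log$ is the clean device that yields the statement as written; the rest is algebraic bookkeeping. Verifying that Lemma \ref{lem:drift}'s hypotheses apply and that the transient term has the correct sign are the other essential checks.
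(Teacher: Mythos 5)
Your proof is correct and reaches exactly the stated bound, but it takes a different route from the paper's at two points, both interesting to compare. First, the paper handles the transient term in \eqref{eq:nice-ineq} loosely: it discards $-D\rho^t$ (using $D\rho^t \geq 0$) and bounds $e^{rq_0}\rho^t \leq e^{rV/\omega_b}$, arriving at $\expect{e^{rQ(t)}} \leq D + e^{rV/\omega_b}$. You instead observe that $D \geq e^{rV/\omega_b} \geq e^{rq_0}$, so the transient term $(e^{rq_0}-D)\rho^t$ is nonpositive and can simply be dropped, giving the strictly tighter $\expect{e^{rQ(t)}} \leq D$. Second, the paper then applies Jensen directly to the convex map $x\mapsto e^{rx}$, which produces the ``$1+$'' inside the logarithm for free because of the $+e^{rV/\omega_b}$ term it kept around; you instead need the auxiliary pointwise bound $(x)^+ \leq \tfrac{1}{r}\log(1+e^{rx})$ plus concavity of $\log$ to manufacture that ``$1+$.'' The two proofs are roughly equal in length; yours has the advantage of exposing that the lemma is not tight (your naive Jensen already yields $\expect{Q(t)} \leq V/\omega_b + \tfrac{1}{r_R}\log\tfrac{e^{r_R\delta_{max}}-\rho_R}{1-\rho_R}$, slightly sharper), while the paper's has the advantage that the looser intermediate step makes the final algebra collapse without extra machinery. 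Both arrive at the same $O(V)$ conclusion, and your verification of the hypotheses of Lemma \ref{lem:drift} and the sign of the transient term is sound.
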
 

The lemma provides a bound on $\expect{Q(t)}$ that does not depend on $t$.  
The bound holds whenever the initial condition satisfies 
$0 \leq q_0 \leq V/\omega_b$.  Typically, the initial condition is $q_0=0$.  However, a \emph{place-holder} 
technique in Section \ref{section:place-holders} requires a nonzero initial condition that still satisfies the desired
inequality $0 \leq q_0 \leq V/\omega_b$. 

\begin{proof} For ease of notation, let ``$r$'' and ``$\rho$'' respectively 
denote ``$r_R$'' and ``$\rho_R$'' given in \eqref{eq:r4} and \eqref{eq:rho4}. 
Define $\theta=V/\omega_b$ and 
$\beta=\beta_R$.  
By \eqref{eq:nice-ineq} one has for all $t\geq0$ (using $Z(0)=Q(0)=q_0$): 
\begin{eqnarray*}
\expect{e^{rQ(t)}} &\leq& D + (e^{rq_0}-D)\rho^t \\
&\leq& D + e^{rV/\omega_b}
\end{eqnarray*}
where $D$ is given in \eqref{eq:D}, and 
where the final inequality uses  $D\rho^t \geq 0$ and $q_0 \leq V/\omega_b$. 
Using Jensen's inequality gives: 
\[ e^{r\expect{Q(t)}} \leq D +e^{rV/\omega_b} \]
Taking a log of both sides and dividing by $r$ gives: 
\begin{eqnarray*}
  \expect{Q(t)} &\leq& \frac{\log(D+e^{rV/\omega_b})}{r} \\
  &=& \frac{1}{r}\log\left(e^{rV/\omega_b} + \frac{(e^{r\delta_{max}} - \rho)e^{rV/\omega_b}}{1-\rho}\right) \\
  &=& \frac{V}{\omega_b} + \frac{1}{r}\log\left(1 + \frac{e^{r\delta_{max}}-\rho}{1-\rho} \right)
    \end{eqnarray*}
\end{proof}

\begin{lem} \label{lem:1boundzzz} If $0 \leq q_0 \leq V/\omega_b$ and $V \geq \omega_M^2$,  then for all slots $t>0$: 
\[ \overline{1}^{(4)}(t) \leq O(e^{-r_RV(\frac{1}{\omega_{b-1}}- \frac{1}{\omega_b})})\]
where $r_R$ is given by \eqref{eq:r4}. 
\end{lem}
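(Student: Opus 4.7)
The plan is to apply Corollary \ref{corollary} directly to the process $Z(t) = Q(t)$ with the drift parameters already identified just before Lemma \ref{lem:q-bound}: namely $\theta = V/\omega_b$, $\beta = \beta_R$, $\delta_{max} = \max[\omega_M, a_{max}]$, and $z_0 = q_0$. As noted in the text above Lemma \ref{lem:q-bound}, the bounded-increment condition \eqref{eq:z-delta} and the drift condition \eqref{eq:drift-lem-cond} both hold for $Q(t)$ with these constants, by \eqref{eq:neg-drift-Q} and by $|Q(t+1)-Q(t)| \le \delta_{max}$.

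The key observation is that $\overline{1}^{(4)}(t)$ is, by definition of $\script{I}^{(4)} = [V/\omega_{b-1},\infty)$, the time-averaged probability that $Q(\tau) \ge V/\omega_{b-1}$. I would rewrite the threshold as
\begin{equation*}
\frac{V}{\omega_{b-1}} = \frac{V}{\omega_b} + V\left(\frac{1}{\omega_{b-1}} - \frac{1}{\omega_b}\right) = \theta + c,
\end{equation*}
where $c \defequiv V(1/\omega_{b-1} - 1/\omega_b) > 0$ since $\omega_{b-1} < \omega_b$. Thus
\begin{equation*}
\overline{1}^{(4)}(t) = \frac{1}{t}\sum_{\tau=0}^{t-1}\expect{1\{Q(\tau) \ge \theta + c\}}.
\end{equation*}

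Since the hypothesis $0 \le q_0 \le V/\omega_b$ gives $z_0 \le \theta$, I can invoke the sharper bound \eqref{eq:cor2b} of the corollary (avoiding the transient term entirely) and obtain
\begin{equation*}
\overline{1}^{(4)}(t) \le \frac{e^{-r_R c}\left(e^{r_R \delta_{max}} - \rho_R + 1/t\right)}{1-\rho_R}.
\end{equation*}
Substituting the value of $c$ gives the exponential factor $e^{-r_R V(1/\omega_{b-1} - 1/\omega_b)}$. Finally, I would observe that the bracketed factor is bounded above by the constant $(e^{r_R \delta_{max}} - \rho_R + 1)/(1-\rho_R)$ uniformly in $t \ge 1$, so the overall bound is $O(e^{-r_R V(1/\omega_{b-1} - 1/\omega_b)})$ as claimed.

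There is essentially no obstacle here: all the heavy lifting was done in Lemma \ref{lem:drift} and Corollary \ref{corollary}. The only thing to be careful about is verifying that the assumption $z_0 \le \theta$ for \eqref{eq:cor2b} is precisely what the hypothesis $q_0 \le V/\omega_b$ provides, and that the constant in front of the exponential is $t$-independent (so the bound really is of the form $O(\cdot)$ with no hidden dependence on $t$).
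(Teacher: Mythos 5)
Your proof is correct and follows essentially the same route as the paper's: both identify $Z(t)=Q(t)$, $\theta=V/\omega_b$, $\beta=\beta_R$, and $c=V(1/\omega_{b-1}-1/\omega_b)$, note $z_0=q_0\le\theta$, and apply inequality \eqref{eq:cor2b} of Corollary \ref{corollary}. The only detail the paper handles explicitly that you glide past is the degenerate case $\omega_{b-1}=\omega_0=0$, for which $\script{I}^{(4)}$ is empty and $\overline{1}^{(4)}(t)=0$ trivially (and for which your $c$ would not be a finite number); otherwise the two arguments coincide.
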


\begin{proof} 
For ease of notation, this proof uses ``$r$'' to denote ``$r_R$.'' 
If the interval $\script{I}^{(4)}$ does not exist then $\overline{1}^{(4)}(t)=0$ and the result is trivial.  Now suppose
interval $\script{I}^{(4)}$ exists (so that the interval $\script{I}^{(3)}$ is not the final interval in Fig. \ref{fig:drift-line}). 
Define $\theta = V/\omega_b$, $c = V(1/\omega_{b-1} - 1/\omega_{b})$, $\beta = \beta_R$, $\rho = 1-r\beta_R/2$. Then $1\{Q(\tau) \geq \theta + c\} = 1$ if and only if $Q(\tau) \geq V/\omega_{b-1}$, which holds if and only if $Q(\tau) \in \script{I}_4$.  Thus, for all slots $t>0$: 
\begin{eqnarray}
\overline{1}^{(4)}(t) &=& \frac{1}{t}\sum_{\tau=0}^{t-1} \expect{1\{Q(\tau)\geq \theta + c\}} \nonumber \\
&\leq& \frac{ e^{-rc} (e^{r\delta_{max}} -\rho + 1/t)}{1-\rho} \label{eq:bat1} \\
&=& \frac{e^{-rV(\frac{1}{\omega_{b-1}}-\frac{1}{\omega_b})} (e^{r\delta_{max}}-\rho+1/t)}{1-\rho} \nonumber
\end{eqnarray}
where \eqref{eq:bat1} holds by \eqref{eq:cor2b} (which applies since $z_0=q_0 \leq \theta$).  The right-hand-side of the above inequality is indeed of the form $O(e^{-rV(\frac{1}{\omega_{b-1}}- \frac{1}{\omega_b})})$. 
\end{proof}

\subsection{Bounding $\overline{1}^{(1)}(t)$} 

One can similarly prove a bound on $\overline{1}^{(1)}(t)$.  The intuition is that the positive drift in region $\script{I}^{(2)}$ of Fig. \ref{fig:drift-line}, together with the fact that the size of interval $\script{I}^{(2)}$ is $\Theta(V)$, makes the fraction of time the queue is to the left of $V/\omega_b$ decay exponentially as we move further left. 
The result is given below.  Recall that $Q(0)=q_0$ for some constant $q_0\geq 0$. 

\begin{lem} \label{lem:1boundneg} If $q_0\geq 0$ and $V \geq \omega_M^2$, then for all slots $t>0$ one has: 
\[ \overline{1}^{(1)}(t) \leq   O(V)/t + O(e^{-r_LV(\frac{1}{\omega_{b}} - \frac{1}{\omega_{b+1}})}) \]
where $r_L$ is defined: 
\begin{equation*}
r_L \defequiv \frac{\beta_L}{\delta_{max}^2 + \delta_{max} \beta_L/3}
\end{equation*} 
\end{lem}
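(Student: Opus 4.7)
The plan is to apply Lemma~\ref{lem:drift} and Corollary~\ref{corollary} to the reflected process $Z(t) \defequiv V/\omega_b - Q(t)$, mirroring Lemma~\ref{lem:1boundzzz} but exploiting the positive drift in the left half of the queue space rather than the negative drift in the right half. First I would verify the two hypotheses of Lemma~\ref{lem:drift} for $Z$: the bound $|Z(t+1)-Z(t)| = |Q(t+1)-Q(t)| \leq \delta_{max}$ gives \eqref{eq:z-delta}, and \eqref{eq:pos-drift-Q} gives $\expect{Z(t+1)-Z(t)\mid Z(t)} \leq -\beta_L$ whenever $Q(t) < V/\omega_b$, i.e., $Z(t) > 0$. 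This yields \eqref{eq:drift-lem-cond} with threshold $\theta_Z = 0$ and $\beta = \beta_L$, modulo the single boundary point $Q(t) = V/\omega_b \in \script{I}^{(3)}$, which can be handled by an arbitrarily small upward shift of $\theta_Z$ that does not affect the final constants.

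Next I would translate the target event into the $Z$-picture: $Q(\tau) \in \script{I}^{(1)} = [0,V/\omega_{b+1})$ is equivalent to $Z(\tau) > V(1/\omega_b - 1/\omega_{b+1})$. Setting $c \defequiv V(1/\omega_b - 1/\omega_{b+1}) > 0$ (positive since $\omega_{b+1} > \omega_b$), one has $1\{Q(\tau)\in \script{I}^{(1)}\} \leq 1\{Z(\tau)\geq c\}$. Applying Corollary~\ref{corollary} with $\theta = 0$, this value of $c$, $r = r_L$, and $\rho_L = 1 - r_L\beta_L/2$ bounds $\overline{1}^{(1)}(t)$ by a ``steady-state'' term of order $e^{-r_L c}$, which is precisely the desired $O(e^{-r_L V(1/\omega_b - 1/\omega_{b+1})})$, plus transient terms involving the free parameter $T$ and the initial value $z_0 = V/\omega_b - q_0$.

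The main obstacle is handling the initial condition. Unlike Lemma~\ref{lem:1boundzzz}, where $q_0 \leq V/\omega_b = \theta$ triggers the clean bound \eqref{eq:cor2b}, here $z_0$ can be as large as $V/\omega_b$ while $\theta = 0$, so the factor $e^{r_L z_0}$ appearing in the transient term of \eqref{eq:cor2a} can be exponentially large in $V$. The remedy is to pick $T$ large enough that $\rho_L^T e^{r_L z_0} = O(1)$; using $\rho_L^T \leq e^{-T r_L \beta_L/2}$, the choice $T = \lceil 2V/(\omega_b \beta_L)\rceil = \Theta(V)$ suffices. With this $T$, the bracketed transient contribution in \eqref{eq:cor2a} becomes $T/t + O(e^{-r_L c}/t) = O(V)/t + O(e^{-r_L c}/t)$, and combining with the steady-state term yields the claimed bound $\overline{1}^{(1)}(t) \leq O(V)/t + O(e^{-r_L V(1/\omega_b - 1/\omega_{b+1})})$ for $t > T$. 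For $t \leq T$, the bound holds trivially via $\overline{1}^{(1)}(t) \leq 1 \leq T/t$.
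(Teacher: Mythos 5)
Your proposal is correct and follows essentially the same route as the paper: the reflected process $Z(t) = V/\omega_b - Q(t)$, verification of the drift conditions via \eqref{eq:pos-drift-Q}, the same shift $c = V(1/\omega_b - 1/\omega_{b+1})$, an infinitesimal positive threshold to handle the boundary point $Z(t)=0$, application of \eqref{eq:cor2a}, and a $\Theta(V)$ choice of $T$ to absorb the initial-condition term; your constant for $T$ differs from the paper's (which uses $T \approx r_L V/(\omega_{b+1}\log(1/\rho_L))$ based on $z_0 - c \leq V/\omega_{b+1}$ rather than the looser $z_0 \leq V/\omega_b$), but both are $\Theta(V)$ and yield the same order bound. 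Your explicit disposal of the case $t \leq T$ is a small tidiness improvement over the paper's exposition.
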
 

Intuitively, the first term in the above lemma (that is, the $O(V)/t$ term) 
bounds the contribution from the \emph{transient time} 
starting from the initial state $Q(0)=q_0$ and ending when the threshold $V/\omega_b$ is crossed.  
The second term 
represents a  ``steady state'' probability assuming an initial condition $V/\omega_b$.  The proof defines a new
process $Z(t) = V/\omega_b-Q(t)$.  It then applies inequality \eqref{eq:cor2a}  
of Corollary \ref{corollary}, with a suitably large time $T>0$,  to handle the initial condition $z_0=V/\omega_b-q_0$. 

\begin{proof} (Lemma \ref{lem:1boundneg})  Define $Z(t) = V/\omega_b - Q(t)$ and note that  $|Z(t+1)-Z(t)|\leq \delta_{max}$ still holds. Further, from \eqref{eq:pos-drift-Q} it holds: 
\begin{eqnarray*}
\expect{Z(t+1) - Z(t)|Z(t)} \leq  \left\{ \begin{array}{ll}
\delta_{max} &\mbox{ if $Z(t) \leq 0$} \\
-\beta_L  & \mbox{ if $Z(t) >0$} 
\end{array}
\right.
\end{eqnarray*}
Now define $\theta$ as any positive value.  It follows that: 
\begin{eqnarray*}
\expect{Z(t+1) - Z(t)|Z(t)} \leq  \left\{ \begin{array}{ll}
\delta_{max} &\mbox{ if $Z(t) < \theta$} \\
-\beta_L  & \mbox{ if $Z(t) \geq \theta$} 
\end{array}
\right.
\end{eqnarray*}
Thus, the conditions \eqref{eq:z-delta}-\eqref{eq:drift-lem-cond} hold for this $Z(t)$ process, with initial condition 
$z_0 = V/\omega_b - q_0$.  Therefore, Corollary \ref{corollary} can be applied. 

For ease of notation let ``$r$'' represent ``$r_L$,'' let ``$\beta$'' represent ``$\beta_R$,'' and let ``$\rho$'' 
represent ``$\rho_L$,'' where $\rho_L \defequiv 1- r_L\beta_L/2$.  Define $c = V/\omega_b - V/\omega_{b+1}$. 
From \eqref{eq:cor2a} of Corollary \ref{corollary}, the following holds
for all slots $T$, $t$ such that $0 \leq T < t$: 
\begin{align*}
&\frac{1}{t}\sum_{\tau=0}^{t-1} \expect{1\{Z(\tau)\geq \theta+c\}} \\
&\leq \frac{(e^{r\delta_{max}}-\rho)e^{-rc}}{1-\rho} + \left[\frac{T}{t} + \frac{e^{r(z_0-c-\theta)}\rho^T}{t(1-\rho)}\right]
\end{align*}
This holds for all $\theta>0$. 
Taking a limit as $\theta\rightarrow 0^+$ gives: 
\begin{align*}
&\frac{1}{t}\sum_{\tau=0}^{t-1} \expect{1\{Z(\tau)> c\}} \\
&\leq \frac{(e^{r\delta_{max}}-\rho)e^{-rc}}{1-\rho} + \left[\frac{T}{t} + \frac{e^{r(z_0-c)}\rho^T}{t(1-\rho)}\right]
\end{align*}
Notice that the event $1\{Z(\tau)>c\}$ is equivalent to the event $\{Q(\tau)<V/\omega_{b+1}\}$, which is the same
as the event $Q(\tau) \in \script{I}_1$ (see Fig. \ref{fig:drift-line}). Thus, the left-hand-side of the above inequality is the same as $\overline{1}^{(1)}(t)$.  Hence: 
\begin{eqnarray*}
\overline{1}^{(1)}(t) &\leq& \frac{(e^{r\delta_{max}}-\rho)e^{-rc}}{1-\rho} + \left[\frac{T}{t} + \frac{e^{r(z_0-c)}\rho^T}{t(1-\rho)}\right] \\
&\leq& \frac{(e^{r\delta_{max}}-\rho)e^{-rc}}{1-\rho} + \left[\frac{T}{t} + \frac{e^{rV/\omega_{b+1}}\rho^T}{t(1-\rho)}\right] 
\end{eqnarray*}
where the final inequality uses the fact that $z_0 \leq V/\omega_b$.  By definition of $c$, the first term on the right-hand-side is 
$O(e^{-r_LV(\frac{1}{\omega_b}-\frac{1}{\omega_{b+1}})})$. It remains to choose a value $T>0$ for which the remaining two terms (in brackets) are $O(V)/t$.   
To this end, define $x\defequiv r/(\omega_{b+1}\log(1/\rho_L))$. 
Choose $T$ as the smallest integer that is greater than or equal to $x V$.  Then $T = O(V)$ and: 
\begin{eqnarray*}
\frac{T}{t} &\leq& O(V)/t \\
\frac{e^{rV/\omega_{b+1}}\rho^T}{t(1-\rho)} &\leq& \frac{e^{rV/\omega_{b+1}}\rho^{x V}}{t(1-\rho)}\\
&=& \frac{1}{t(1-\rho)}\\
&\leq& O(V)/t
\end{eqnarray*}
\end{proof}

\subsection{Optimal backlog and near-optimal convergence time} 

Define: 
\[ \gamma \: \defequiv \: \min\left[r_R\left(\frac{1}{\omega_{b-1}}-\frac{1}{\omega_b}\right), r_L\left(\frac{1}{\omega_b}-\frac{1}{\omega_{b+1}}\right)\right] \]
Results of Lemmas \ref{lem:q-bound}-\ref{lem:1boundneg} imply that if the drift-plus-penalty 
algorithm \eqref{eq:dpp} is used with $V \geq \omega_M^2$, and if the initial queue state satisfies
$0 \leq q_0\leq V/\omega_b$, then for all $t>0$: 
\begin{eqnarray}
\overline{Q}(t) &\leq& O(V) \label{eq:yep1}  \\
\expect{Q(t)}/t &\leq& O(V)/t \label{eq:yep2}\\
\overline{1}^{(4)}(t) &\leq& O(e^{-\gamma V})\label{eq:yep3}  \\
\overline{1}^{(1)}(t) &\leq& O(e^{-\gamma V}) + O(V)/t \label{eq:yep4} 
\end{eqnarray}
Indeed, \eqref{eq:yep1}-\eqref{eq:yep2} follow from Lemma \ref{lem:q-bound}, while \eqref{eq:yep3} and 
\eqref{eq:yep4} follow from Lemmas \ref{lem:1boundzzz} and \ref{lem:1boundneg}, respectively.

Fix $\epsilon>0$ and define: 
\begin{eqnarray*}
 V &=& \max[(1/\gamma)\log(1/\epsilon), \omega_M^2] \\
 T_{\epsilon} &=& \log(1/\epsilon)/\epsilon 
 \end{eqnarray*}
Inequalities \eqref{eq:yep1}-\eqref{eq:yep4} can be used to easily derive the following facts: 
\begin{itemize} 
\item Fact 1: For all slots $t>0$ one has $\overline{Q}(t) \leq O(\log(1/\epsilon))$.
\item Fact 2: For all slots $t>T_{\epsilon}$ one has $\expect{Q(t)}/t \leq O(\epsilon)$. 
\item Fact 3: For all slots $t>0$ one has $\overline{1}^{(4)}(t) \leq O(\epsilon)$. 
\item Fact 4: For all slots $t>T_{\epsilon}$ one has $\overline{1}^{(1)}(t) \leq O(\epsilon)$. 
\end{itemize} 

Fact 2  and Lemma \ref{lem:mu-equality} ensure that for $t > T_{\epsilon}$: 
\begin{equation} \label{eq:approx1a} 
 \overline{\mu}(t) \geq \lambda - O(\epsilon) 
 \end{equation} 
Facts 2, 3, 4 and Lemma \ref{lem:1bound} ensure that for $t > T_{\epsilon}$: 
\begin{eqnarray*}
|\overline{1}^{(2)}(t) - \theta| \leq O(\epsilon) \: \: \: , \: \: \: 
|\overline{1}^{(3)}(t) - (1-\theta)| \leq O(\epsilon)  
\end{eqnarray*}
Substituting the above into \eqref{eq:p-bound} proves that for $t>T_{\epsilon}$: 
\begin{eqnarray}
\overline{p}(t) &\leq& \theta h(\mu_{b+1}) + (1-\theta)h(\mu_b)  + O(\epsilon) \nonumber \\
&=& p^* + O(\epsilon) \label{eq:approx1b}  
\end{eqnarray}
The guarantees \eqref{eq:approx1a} and \eqref{eq:approx1b} show that the drift-plus-penalty algorithm
gives an $O(\epsilon)$-approximation with convergence time $T_{\epsilon} = O(\log(1/\epsilon)/\epsilon)$. 
This is within a factor $\log(1/\epsilon)$ of the convergence time lower bound given in Section \ref{section:converse}.
Hence, the algorithm has near-optimal convergence time.  

Further, it is known that if the rate-power curve $h(\mu)$ has at least two piecewise linear segments and if the point $(\lambda, h(\lambda))$ does not lie on the segment closest to the origin, then any algorithm 
that yields an $O(\epsilon)$-approximation must have average queue size that satisfies $\overline{Q}(t) \geq \Omega(\log(1/\epsilon))$ \cite{neely-energy-delay-it}.  Fact 1 shows that the drift-plus-penalty algorithm meets
this bound with equality.  Hence, not only does it provide near optimal convergence time, it provides an optimal average queue size tradeoff. 

\section{Practical improvements} \label{section:place-holders} 

\subsection{Place-holders} 

The structure of this problem admits a practical improvement in queue size 
via the \emph{place-holder technique} of \cite{sno-text}.  This does not change the $O(\log(1/\epsilon))$ average queue size tradeoff with $\epsilon$, 
but can reduce the \emph{coefficient} that multiplies the $\log(1/\epsilon)$ term.  Assume that $V \geq 0$ and define the following nonnegative parameter: 
\begin{equation} \label{eq:qplace}
 q_{place} \defequiv  \max\left[ \frac{V}{\omega_M} - \omega_M, 0\right] 
 \end{equation} 
The technique uses a nonzero initial condition $Q(0)=q_{place}$, where the initial backlog $q_{place}$ is \emph{fake data}, also called \emph{place-holder backlog}.   Note that $q_{place}>0$ if and only if $V>\omega_M^2$. 

The following lemma refines Lemma \ref{lem:remove-max} and shows that this place-holder backlog is never transmitted.  Hence, it acts only to shift the queue size up to a value required to make  desirable 
power allocation decisions via \eqref{eq:dpp}. 

\begin{lem} If $V \geq \omega_M^2$ and $Q(0)=q_{place}$,   then the drift-plus-penalty algorithm \eqref{eq:dpp} chooses $p(t)=\mu(t)=0$ whenever $Q(t) < V/\omega_M$.   Thus, $Q(t) \geq q_{place}$ for all $t$. 
\end{lem}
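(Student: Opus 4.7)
My plan is to prove the two claims in order: first the stronger version of Lemma \ref{lem:remove-max}(a) with the improved threshold $V/\omega_M$, and then the invariant $Q(t)\geq q_{place}$ by induction on $t$, using the first claim to handle the ``low queue'' case.

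For the first claim, I would argue by direct algebra from the algorithm rule \eqref{eq:dpp}. Suppose $Q(t) < V/\omega_M$. Since $\omega(t) \leq \omega_M$ for all $t$, multiplying gives $Q(t)\omega(t) \leq Q(t)\omega_M < V$. Hence the condition $Q(t)\omega(t) \geq V$ in \eqref{eq:dpp} fails, so $p(t)=0$ and therefore $\mu(t) = p(t)\omega(t) = 0$. Note this refines Lemma \ref{lem:remove-max}(a) since $\omega_M \leq V/\omega_M$ whenever $V\geq \omega_M^2$.

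For the lower bound on $Q(t)$, I would induct on $t$. The base case $Q(0)=q_{place}$ holds by assumption. For the inductive step, assume $Q(t) \geq q_{place}$ and split into two cases based on the threshold $V/\omega_M$. If $Q(t) < V/\omega_M$, the first part of the lemma gives $\mu(t)=0$, so $Q(t+1) = Q(t) + a(t) \geq Q(t) \geq q_{place}$. If instead $Q(t) \geq V/\omega_M$, then using $\mu(t) \leq \omega_M$ and $a(t)\geq 0$,
\begin{equation*}
Q(t+1) \;\geq\; Q(t) - \mu(t) \;\geq\; \frac{V}{\omega_M} - \omega_M \;=\; q_{place},
\end{equation*}
where the last equality uses $V\geq \omega_M^2$ in the definition \eqref{eq:qplace}. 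In particular the quantity inside the max in \eqref{eq:q-update} is already nonnegative, so the max has no effect.

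There is really no hard step here; the only point that requires a moment of thought is that the definition of $q_{place}$ in \eqref{eq:qplace} is precisely calibrated so that the worst-case one-slot drop $\omega_M$ from the boundary $V/\omega_M$ lands exactly at $q_{place}$. Once one observes this, the refined no-transmit region $[0, V/\omega_M)$ (rather than $[0,\omega_M)$ as in Lemma \ref{lem:remove-max}) combined with the boundary-matching choice of $q_{place}$ makes the inductive argument immediate.
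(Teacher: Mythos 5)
Your proof is correct, and it follows the approach the paper indicates (the paper omits the proof, saying only that it is ``similar to that of Lemma~\ref{lem:remove-max}''). The first part is exactly the Lemma~\ref{lem:remove-max}(a) algebra with the sharpened threshold $V/\omega_M$ in place of $\omega_M$; the second part is the natural induction, and you correctly identify that the calibration $q_{place} = V/\omega_M - \omega_M$ is what makes the boundary case close.
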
 
\begin{proof} 
The proof is similar to that of Lemma \ref{lem:remove-max} and is omitted for brevity. 
\end{proof} 

Consequently, at every slot $t$ the queue can be decomposed as $Q(t) = q_{place} + Q^{real}(t)$, where $Q^{real}(t)$ is the \emph{real} queue backlog from actual arrivals.  The sample path of $Q(t)$ and all power decisions $p(t)$ are the same as when the drift-plus-penalty algorithm is implemented with the nonzero initial condition $q_{place}$.   Of course, every transmission $\mu(t)$ sends real data from the queue, rather than fake data.  The resulting algorithm is: 

\begin{itemize} 
\item Initialize $Q^{real}(0)=0$.
\item Every slot $t$, observe $Q^{real}(t)$ and $\omega(t)$ and choose: 
\[ p(t) = \left\{ \begin{array}{ll}
1 &\mbox{ if $(q_{place} + Q^{real}(t))\omega(t) \geq V$} \\
0  & \mbox{ otherwise} 
\end{array}
\right. \]
\item Update $Q^{real}(t)$ by: 
\begin{equation} \label{eq:q-real-update} 
Q^{real}(t+1) = \max[Q^{real}(t) + a(t) - p(t)\omega(t),0] 
\end{equation} 
\end{itemize} 

If $q_{place}>0$ then $q_{place} = V/\omega_M - \omega_M \leq V/\omega_b$.  Thus, $0 \leq q_{place} \leq V/\omega_b$, 
and so the initial condition $Q(0)=q_{place}$ still meets the requirements of the lemmas of the previous section. Therefore, 
the same performance
bounds hold for the power process $p(t)$ and the queue size process $Q(t)$.  However, at every instant of time, 
the \emph{real} queue size
$Q^{real}(t)$ is reduced by exactly $q_{place}$ in comparison to $Q(t)$.

\subsection{LIFO scheduling} 

The queue update equations \eqref{eq:q-real-update} and \eqref{eq:q-update} allow for any work-conserving scheduling mechanism.  
The default mechanism is First-In-First-Out (FIFO).  However, the Last-In-First-Out (LIFO) scheduling 
discipline can provide significant delay improvements for $98\%$ of the packets \cite{moeller-LIFO}\cite{longbo-LIFO-ton}. 
Intuitively, the reason is the following:  Results in the previous section show that, for sufficiently large $V$, the backlog $Q(t)$ is almost 
always to the right of the $V/\omega_{b+1}$ point in Fig. \ref{fig:drift-line}.   Suppose the place-holder technique is not used.  Then packets that arrive when $Q(t)\geq V/\omega_{b+1}$ must wait for at least $V/\omega_{b+1}$ units of data to be served under FIFO, but are transmitted more quickly under LIFO.  Work in \cite{longbo-LIFO-ton} mathematically formalizes this observation. 
Roughly speaking, most packets have average delay reduced by at least  $V/(\omega_{b+1}\lambda)$ under LIFO (and without the place-holder technique).  With the place-holder technique, this reduction is changed to 
$(V/\omega_{b+1} - q_{place})/\lambda$  (since the place-holder technique already reduces average delay of \emph{all packets} by $q_{place}/\lambda$).   One caveat is that, under LIFO,  
a finite amount of arriving data \emph{might never be transmitted}. For example, if drift-plus-penalty is implemented without the place-holder technique, then the first $q_{place}$ units of arriving data will never exit under LIFO, where $q_{place}$ is given in \eqref{eq:qplace}.   Of course, using LIFO as opposed to FIFO does not change the total queue size or the fundamental tradeoff between total average queue size and average power. 
These issues are explored via simulation in the next section.

\section{Simulation} 

\subsection{Two channel states}

\begin{figure}[htbp]
   \centering
   \includegraphics[width=3.75in]{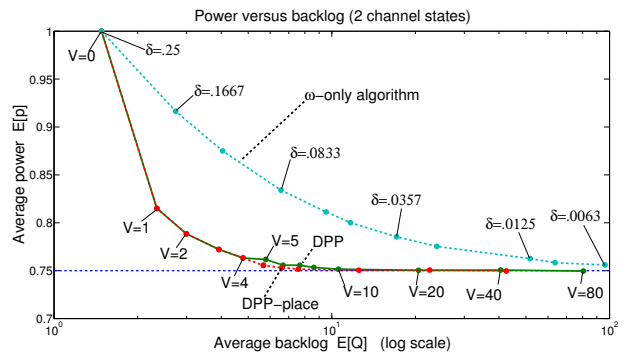} 
   \caption{Average power versus average backlog for the case of 2 channel states.  All data points are averages obtained after simulation over 1 million slots.  Three algorithms are shown.  The drift-plus-penalty (DPP) algorithms use various values of $V$.  The $V$ values are labeled for select points on the DPP curve (green).  The $\omega$-only algorithm uses various values of $\delta$. The DPP algorithms significantly outperform $\omega$-only.  DPP-place (red) provides only a modest gain over DPP (green) in the range $V \in [4, 10]$.}
   \label{fig:2-channels}
\end{figure}

Consider the scenario of Case 1 in Section \ref{section:converse}. There are two channel states $\omega(t) \in \{1, 2\}$ 
with $\pi(1) = 3/4, \pi(2) = 1/4$.  The $h(\mu)$ curve is shown in Fig. \ref{fig:case1}. Assume the arrival process $a(t)$ is i.i.d. over slots with: 
\[ Pr[a(t)=0]=\frac{2}{5} \: , \: Pr[a(t)=1] = \frac{1}{5} \: , \: Pr[a(t)=2] = \frac{2}{5} \]
The arrival rate is $\lambda = \expect{a(t)} = 1$, and the minimum average power required for stability is $p^*=h(1) = 3/4$. 

Three different algorithms are considered below: 
\begin{itemize} 
\item Drift-plus-penalty (DPP) with $Q(0)=0$. 
\item DPP with place-holder (DPP-place) with $q_{place} = \max[V/2-2,0]$ (from \eqref{eq:qplace}) and $Q^{real}(0)=0$.  
\item An $\omega$-only policy designed to satisfy $\expect{\mu(t)} = \lambda + \delta$ and $\expect{p(t)} = h(\lambda + \delta)$. 
\end{itemize} 
The DPP algorithms operate online without knowledge of $\lambda$, $\pi(1)$, $\pi(2)$, while the $\omega$-only policy is designed offline with knowledge of these values.  Results are plotted in Fig. \ref{fig:2-channels} for various values of $V\geq 0$ and $\delta\geq 0$. 
\emph{The DPP  algorithms significantly outperform the $\omega$-only algorithm even though they do not have knowledge of the system probabilities}. 
The theoretical tradeoffs of the previous section were derived under the assumption that $V \geq \omega_M^2$ (in this case, $\omega_M^2 = 2^2=4)$.  However, the DPP algorithms can be implemented for any value $V \geq 0$.  Observe from the figure that 
average power starts approaching optimality even for values $V<4$, and converges to the optimal $p^*=3/4$ as $V$ is increased beyond 4.  It can be shown that the $\omega$-only algorithm achieves an $O(\epsilon)$-approximation with average queue size $\Theta(1/\epsilon)$, whereas results in the previous section prove the DPP algorithms achieve an $O(\epsilon)$-approximation with average queue size $\Theta(\log(1/\epsilon))$. 
The simulations verify these theoretical results.

In this example, the DPP place-holder algorithm gives performance very close to standard DPP, with only a modest gain in the range $V\in[4, 10]$.  For values $V \leq 4$ the DPP and DPP-place algorithms are identical. 

Convergence time to the desired constraint $\overline{\mu}(t) \geq \lambda$ is illustrated in Fig. \ref{fig:mu-time} by plotting the empirical value of $\expect{\mu(t)}$ versus time. 
  The $\omega$-only policy is not plotted because it achieves the constraint immediately by its offline design.   
  The DPP-place algorithm shows a slight convergence time improvement over DPP. 
Both DPP algorithms demonstrate that $|\overline{\mu}(t) - \lambda|$ decays like $V/t$.   This is consistent with the theoretical guarantees derived in the previous section.  Indeed, for an $O(\epsilon)$-approximation, one sets $V = \Theta(\log(1/\epsilon))$, so after time $t \geq \Theta(\log(1/\epsilon)/\epsilon)$ the 
deviation from the constraint is at most $O(V/t) \leq O(\epsilon)$.  The corresponding average power $\expect{p(t)}$ is 
plotted in Fig. \ref{fig:p-time}.

\begin{figure}[htbp]
   \centering
   \includegraphics[width=3.75in]{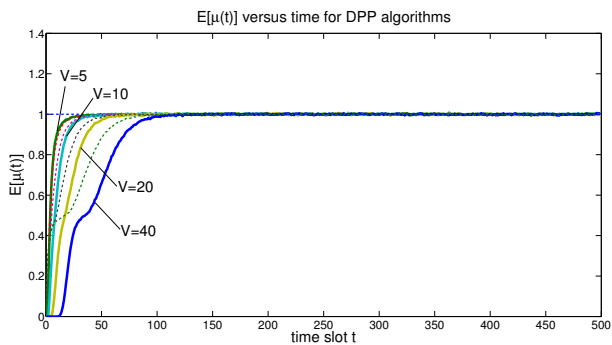} 
   \caption{Average transmission rate $\expect{\mu(t)}$ versus time, obtained from $10^5$ independent simulation runs over the first 500 slots.  The DPP curves are thick, solid, and labeled with $V \in \{5, 10, 20, 40\}$.  The DPP-place curves are thin dashed curves where $V \in \{5, 10, 20, 40\}$ corresponds to red, purple, grey, green, respectively.}
   \label{fig:mu-time}
\end{figure}

\begin{figure}[htbp]
   \centering
   \includegraphics[width=3.75in]{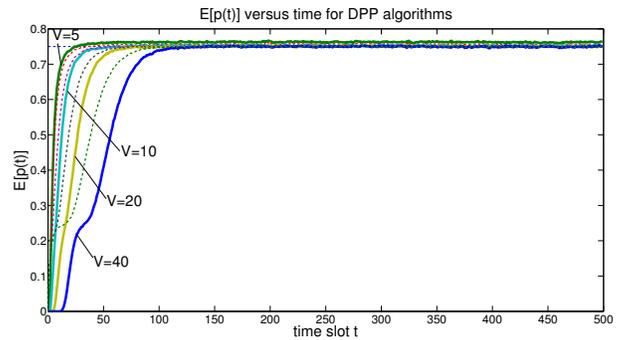} 
   \caption{Average power $\expect{p(t)}$ versus time for the same experiments, $V$ parameters, and color scheme 
   as Fig. \ref{fig:mu-time}.}
   \label{fig:p-time}
\end{figure}

\subsection{Nine channel states} \label{section:nine-channels} 

Now consider a process $\omega(t)$ with 9 possible rates $\{\omega_0, \ldots, \omega_9\}$: 
\[ \Omega = \{0, 3, 7, 11, 18, 22, 24, 36, 46\} \]
The probabilities are: 
\begin{eqnarray*}
\pi(\omega_i) &=&  \left\{ \begin{array}{ll}
1/15 &\mbox{ if $i \in \{0, 1, 2\}$} \\
2/9  & \mbox{ if $i \in \{3, 4, 5\}$} \\
2/45 & \mbox{ if $i \in \{6, 7, 8\}$} 
\end{array}
\right.
\end{eqnarray*}
The arrival process $a(t)$ has probabilities: 
\[ Pr[a(t)=0] = 0.42, Pr[a(t)=20] = 0.58 \]
with arrival rate $\lambda = 11.6$ packets/slot.  The DPP-place 
algorithm uses $q_{place} = \max[V/46 - 46, 0]$ (as in \eqref{eq:qplace}), and $q_{place}>0$ if and only if $V>46^2=2116$.   It can be shown that $p^* = h(\lambda) = 7/15$ for this system.   Simulations for DPP and DPP-place are in Fig. \ref{fig:9-channels}.  
As before, the DPP algorithms outperform the $\omega$-only policy, although the improvements are not as dramatic as they are in Fig. \ref{fig:2-channels}.  This is because the arrival rate vector in this case  is close to a vertex point 
of the $h(\mu)$ curve.  As before, the 
DPP-place algorithm performance is similar to that of DPP with a shifted $V$ parameter. 

\begin{figure}[htbp]
   \centering
   \includegraphics[width=3.75in]{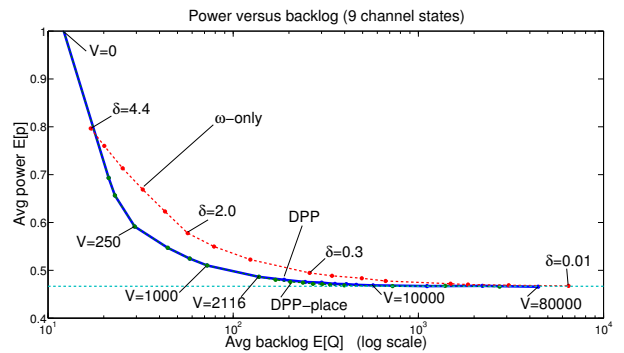} 
   \caption{The 9-channel simulation}
   \label{fig:9-channels}
\end{figure}

\subsection{Robustness to non-ergodic changes} 

This subsection illustrates how the algorithm reacts to nonergodic changes.  The system with 9 possible channel states from the previous section is considered.  The simulation is run over 6000 slots, broken into three phases of 2000 slots each.  The system probabilities are changed at the beginning of each phase.  The algorithm is not aware of the changes and must adapt.  Specifically:

\begin{enumerate} 
\item First phase:  The same parameters of the previous subsection are used (so  $\lambda = 11.6$).   

\item Second phase: Channel probabilities are the same as phase 1. The arrival rate is increased
to $\lambda = 13$ by using $Pr[a(t)=20] = .65$, $Pr[a(t)=0] =  0.35$. 

\item Third phase:  The same arrival rate $\lambda = 13$ of phase 2 is used. However, channel probabilities are changed to: 
\begin{eqnarray*}
\pi(\omega_i) &=&  \left\{ \begin{array}{ll}
1/15 &\mbox{ if $i \in \{0, 1, 2\}$} \\
1/9  & \mbox{ if $i \in \{3, 4, 5\}$} \\
7/45 & \mbox{ if $i \in \{6, 7, 8\}$} 
\end{array}
\right.
\end{eqnarray*}
\end{enumerate} 

\begin{figure}[htbp]
   \centering
   \includegraphics[width=3.75in]{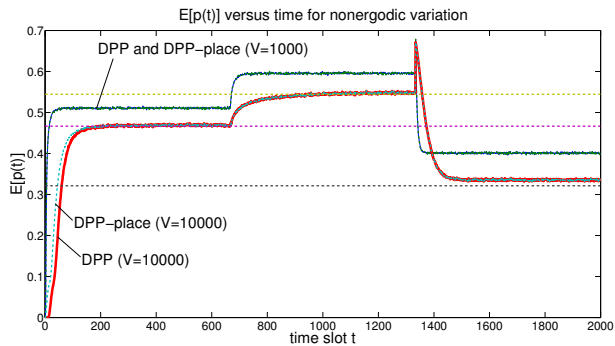} 
   \caption{Average power (obtained from 10000 independent simulation runs) versus time for a system that changes nonergodically over 3 phases.}
   \label{fig:nonerg1}
\end{figure}

\begin{figure}[htbp]
   \centering
   \includegraphics[width=3.75in]{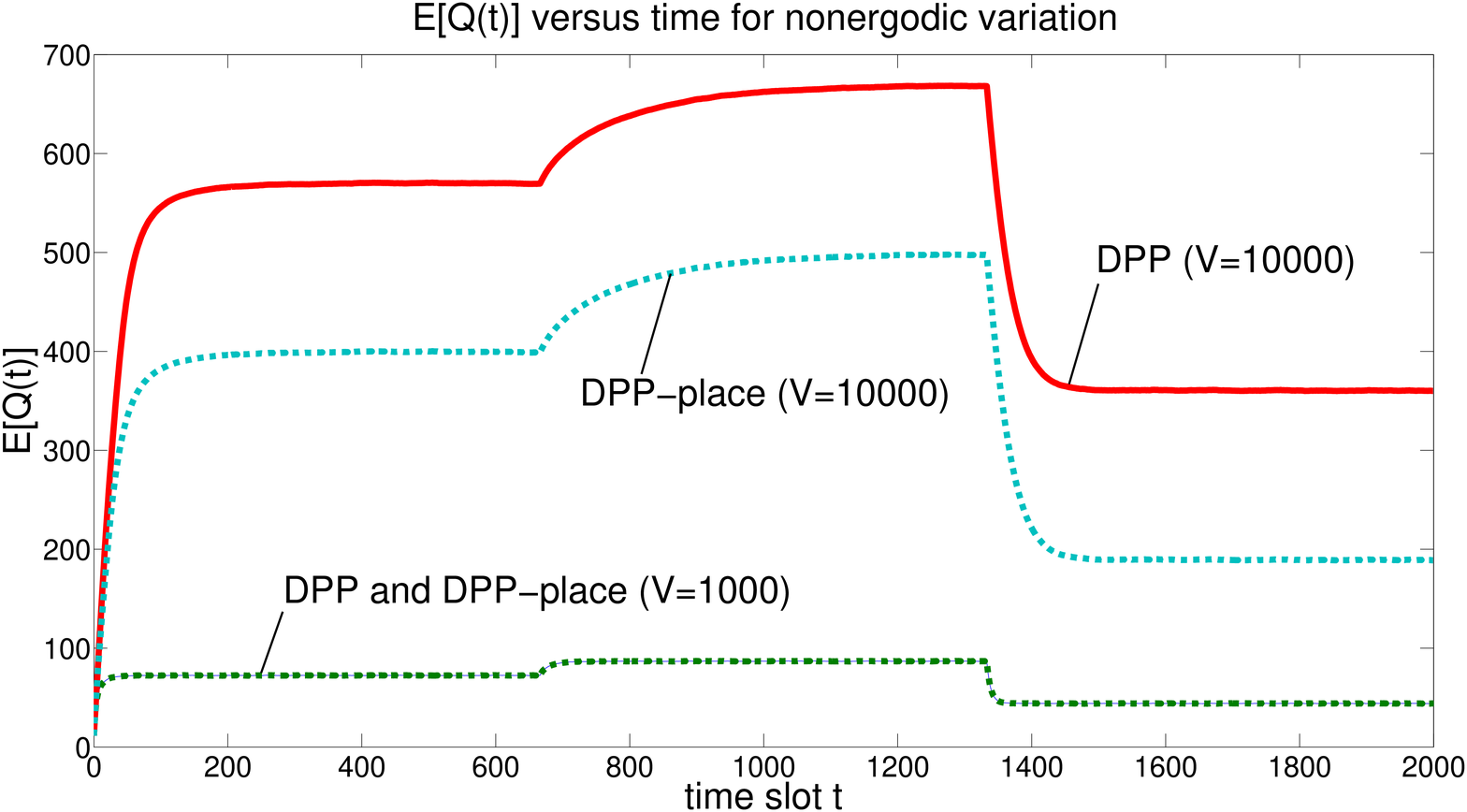} 
   \caption{Average queue size (obtained from 10000 independent simulation runs) versus time for a system that changes nonergodically over 3 phases.}
   \label{fig:nonerg2}
\end{figure}

The resulting power and queue size averages 
are plotted in Figs. \ref{fig:nonerg1} and \ref{fig:nonerg2}. 
The data is obtained by averaging sample paths over $10000$ independent runs.  Fig. \ref{fig:nonerg1} shows that for large $V$, average power  converges to a value close to 
the long-term optimum associated with each phase.  Thus, the DPP algorithms adapt to changing environments.   
For each $V$, average power of DPP-place is roughly the same  as DPP (Fig. \ref{fig:nonerg1}).  Average queue size of DPP-place is smaller than that of DPP when $V$ is large (Fig. \ref{fig:nonerg2}).

\subsection{Delay improvements under LIFO}

Fig. \ref{fig:DPPLIFO} illustrates the gains of Last-in-First-Out (LIFO) scheduling (as in \cite{moeller-LIFO}\cite{longbo-LIFO-ton}) for the 9-channel state system with parameters described in Section \ref{section:nine-channels} (the system is the same as that of Fig. \ref{fig:9-channels}).  Average power is plotted versus average delay (in slots) for DPP-place with and without LIFO.  The LIFO data considers only the $98\%$ of all packets with the smallest delay  (so that $2\%$ of the packets are ignored in the delay computation).  LIFO scheduling significantly reduces delay for these packets.  For example, when $V=80000$, average delay is 236.3 slots without LIFO, and only 20.0 slots with LIFO (average power is the same for both algorithms).

\begin{figure}[htbp]
   \centering
   \includegraphics[width=3.75in]{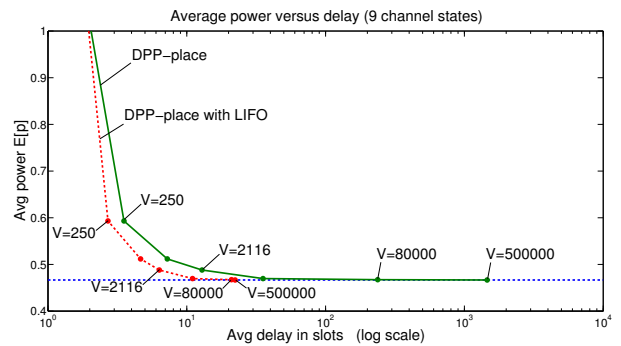} 
   \caption{A comparison of the power-delay tradeoff for DPP-place with and without LIFO. The LIFO data for average delay considers  the best $98 \%$ of all traffic.  Each data point represents a simulation over $10^6$ slots. Average power is the same for both algorithms whenever $V$ is the same.}
   \label{fig:DPPLIFO}
\end{figure}

\section{Conclusions} 

This paper considers convergence time for minimizing average power in a 
wireless transmission link with time varying channels and random traffic.  Prior algorithms produce
an $\epsilon$-approximation with convergence time $O(1/\epsilon^2)$.  This paper shows, for a simple
example, that no algorithm can get convergence time better than $O(1/\epsilon)$.  It then shows that this ideal convergence time tradeoff 
can be approached to within a logarithmic factor. 
Furthermore, the resulting average queue size is at most $O(\log(1/\epsilon))$, which is known 
to be an optimal tradeoff.  This establishes fundamental convergence time, queue size, and power characteristics of 
wireless links.  It shows that learning times in an unknown environment 
can be pushed much faster than expected. 

\bibliographystyle{unsrt}
\bibliography{../../../latex-mit/bibliography/refs}
\end{document}